\newcommand{\re}{\mathbb{R}}
\newcommand{\cor}{\mathcal}
\newtheorem{theorem}{Theorem}
\newtheorem{remark}{Remark}[section]
\newtheorem{definition}{Definition}
\title[Regularized Bridge-type estimation with multiple penalties]{Regularized Bridge-type estimation with multiple penalties }
\numberwithin{equation}{section}
\newcommand{\sgn}{\text{sgn}}
\newcommand{\de}{\mathrm{d}}
\date{\today}
\author{Alessandro De Gregorio, Francesco Iafrate}
\address{Department of Statistical Sciences, ``Sapienza" University of Rome,
	P.le Aldo Moro, 5 - 00185, Rome, Italy}
\email{alessandro.degregorio@uniroma1.it}
\email{francesco.iafrate@uniroma1.it}
\begin{document}
	
\maketitle

\begin{abstract} 
The aim of this paper is to introduce an adaptive penalized estimator for identifying the true reduced parametric model under the sparsity assumption. In particular, we deal with the framework where the unpenalized estimator of the structural parameters  needs  simultaneously multiple rates of convergence (i.e. the so-called mixed-rates asymptotic behavior). We introduce a Bridge-type estimator by taking into account penalty functions involving $\ell^q$ norms $(0<q\leq 1)$. We prove that the proposed regularized estimator satisfies the oracle properties. 

Our approach is useful for the estimation of stochastic differential equations in the parametric sparse setting. More precisely, under the high frequency observation scheme, we apply our methodology to an ergodic diffusion and introduce a procedure for the selection of the tuning parameters. Furthermore, the paper contains a simulation study as well as a real data prediction in order to assess about the performance of the proposed Bridge estimator.

	\end{abstract}

	{\it Keywords}:  high-frequency scheme, oracle properties, multidimensional diffusion processes, prediction accuracy, penalized estimation, quasi-likelihood function
	
\section{Introduction}
	
	Statistical learning ensures high prediction accuracy and discovers relevant predictive variables. Furthermore, variable selection is particularly important when the true underlying model has a sparse representation. Identifying significant variables will enhance the prediction performance of the fitted model. A possible way to address this issue is represented by the stepwise and subset selection procedures. Nevertheless, the main drawbacks of this approach are the computational complexity and the variability.  In the last twenty years  the penalized statistical methods became very popular in the variable selection framework (see, e.g., \cite{hastie1} and \cite{hastie2}).	

Let us consider the classical linear regression model $y= {\bf X} \theta + \varepsilon,$ where $y$ is the response vector and ${\bf X}$ is the $n\times p$ predictor matrix of standardized variables, $\theta:=(\theta_1,...,\theta_p)'\in\mathbb R^p$ is the parametric vector and $\varepsilon$ is a Gaussian vector with independent components with mean zero. 
The least absolute shrinkage and selection operator (LASSO), introduced in \cite{tib}, is a useful and well studied approach to the problem of model selection. Its major advantage is the simultaneous execution of both parameter estimation and variable selection. The LASSO estimator is obtained by solving the $\ell_1$ penalized least squares problem 
\begin{equation}
\hat \theta(\text{LASSO})=\arg\min_{\theta}\{|y-{\bf X}\theta|^2+\lambda||\theta||_1\}
\end{equation}
 where $\lambda\geq 0$ (tuning parameter), $|\cdot|$ is the euclidean distance and $||\theta||_1=\sum_{j=1}^p |\theta_j|$, or, equivalently, by dealing with an unpenalized optimization problem with a constraint
\begin{equation} 
\hat \theta(\text{LASSO})=\arg\min_{\theta}\{|y-{\bf X}\theta|^2\},\quad \text{subject to }\quad  ||\theta||_1\leq t,
\end{equation}
with $t\ge 0.$ Let us notice that the $\ell_1$ penalty admits some singularities. For this reason some coefficients are shrunken to zero.

The Bridge estimation generalized the LASSO approach by using $\ell^q$ norm (see, e.g., \cite{frank} and \cite{knight}) as follows
\begin{equation}\label{eq:bridge}
\hat \theta(\text{Bridge})=\arg\min_{\theta}\left\{|y-{\bf X}\theta|^2+\lambda||\theta||_q\right\},
\end{equation}
where $||\theta||_q=\sum_{j=1}^p |\theta_j|^q, q>0.$ The estimator \eqref{eq:bridge} becomes LASSO for $q=1,$ and Ridge for $q=2.$ 
Notice that, AIC or BIC criterion can be viewed as limiting cases of Bridge estimation as $q\to 0;$ i.e.
$$\lim_{q\to 0}\sum_{j=1}^p |\theta_j|^q=\sum_{j=1}^p 1_{\theta_j\neq 0}.$$ 
There are other possible approaches: for instance, in \cite{fan1}, the authors proposed a shrinkage procedure based on the smoothly clipped deviation (SCAD) penalty term.

The reguralization methods could allow the dimensionality of the parameter space to change with the sample size, this is the main advantage of the LASSO approach over the classical information criterions (AIC, BIC, etc.) which use a fixed penalty on the size of a model.

As argued in \cite{fan1}, \cite{peng} and \cite{fan2}, a good selection procedure should have (asymptotically) the so-called ``oracle'' properties:
\begin{itemize}
\item[i)]  consistently estimates null parameters as zero; i.e the selection procedure identifies the right subset model;
\item[ii)]  has the optimal estimation rate and converges to a Gaussian random variable $N(0,\Sigma_0)$ where $\Sigma_0$ is the covariance matrix of the true subset model.
\end{itemize} 
 As shown in \cite{zou}, since LASSO procedure assigns the same amount of penalization to each parameter, it does not represent an oracle procedure. For this reason the author introduced the following adaptive LASSO estimator
\begin{equation*}
\hat \theta_n(\text{AdaLASSO})=\arg\min_{\theta}\left\{|y-{\bf X}\theta|^2+\lambda_n\sum_{j=1}^p \hat w_j| \theta_j|\right\},
\end{equation*}
 where $\lambda_n>0$ and $(\hat w_j)_{j=1}^p$ are suitable data-driven  weights. Usually $\hat w_j=1/|\hat\theta_j(\text{ols})|^\gamma, \gamma>0.$ As the sample size grows, the weights for zero-coefficient regressors increase to infinity, whereas the weights for nonzero-coefficient predictors tend to a finite constant. Thus, we are able to estimate consistently null parameters and $\hat \theta_n(\text{AdaLASSO})$ is oracle.

 Originally, the LASSO procedure was introduced for linear regression problems, but, in the recent years, this approach has been applied to different fields of stochastic processes. In \cite{wang2}  the problem of shrinkage estimation of regressive and autoregressive coefficients has been dealt with, while in \cite{nardi} penalized order selection AR($p$) models is studied. Furthermore, in \cite{caner} is shown that the Bridge estimator can be used to differentiate stationarity from unit root type of non stationarity and to select the optimal lag in AR series as well.  For other issues on penalized estimation problems for time series analysis, the reader can also consult \cite{basu}. 
 
 Very recently,  regularized estimators has been applied to multidimensional diffusion processes and point processes, and represent a new research topic in the field of statistics for stochastic processes. For instance, in the high frequency framework, the reader can consult \cite{deg}, \cite{mas}, \cite{suz}, where the authors used penalized selection procedure for discovering the underlying true model.  In \cite{deg2}  and \cite{gai}, LASSO and Bridge estimators have been applied to continuously observed stochastic differential equations.
 
 In this paper, we address the estimation problem for a sparse parametric model where different rates of convergence must be considered simultaneously for the asymptotic identification of the vector of structural parameters (i.e. mixed-rates asymptotics). In particular, we introduce a Bridge-type estimator  by means of the least squares approximation approach developed in \cite{wang1}. The main idea is represented by an objective function with multiple adaptive penalty functions involving $\ell^q$ norms $(0<q\leq 1).$  We will show that the estimator obtained by minimizing the above objective function satisfies the oracle properties.

Our motivating example is represented by a multidimensional ergodic diffusion process solution to a stochastic differential equation; i.e.
\begin{equation*}
	\de X_t=b(X_{t},\alpha)\de t+\sigma(X_{t},\beta)\de W_t,\quad X_0=x_0.
	\end{equation*}
	Under the high frequency observation scheme, $\alpha\in\mathbb R^{p_1}$ and $\beta\in\mathbb R^{p_2},$ admit optimal estimators $\hat\alpha_n$ and $\hat \beta_n$ having two different asymptotic rates (i.e. infill asymptotic).  We will assume the sparsity condition for the above diffusion process; i.e. some coefficients of the true values of $\alpha$ and $\beta$ are exactly equal to zero. We will apply our methodology to this framework. Actually, it would be possible to deal with other types of random models described by stochastic differential equations: for instance, small diffusions (see, e.g., \cite{sor} and \cite{glo}), and diffusions with jumps (see, e.g., \cite{shim}, \cite{clem} and \cite{mas2}). Such as random processes define parametric models with two groups of parameters having two different asymptotic rates.

Furthermore, there are several econometric models where the exact evaluation of the structural parameters requires estimators with mixed-rates asymptotics. For instance, in \cite{bertille} the asymptotic theory of GMM inference is extended; the main goal of this work is to allow sample counterparts of the estimating equations to converge at (multiple) rates, different from the usual square-root of the sample size. Moreover, \cite{bertille} contains some econometric examples where the mixed-rates behavior arises. In \cite{lee} the theoretical properties of the maximum likelihood estimator
and the quasi-maximum likelihood estimator for the spatial autoregressive model are investigated. The
rates of convergence of those estimators may depend on some general features of the
spatial weights matrix of the model. When each unit can be influenced by many neighbors, irregularity of the information matrix may occur and various components of the estimators may have different rates of convergence.

The paper is organized as follows. In Section \ref{sec2}, we introduce the estimation parametric problem. The setup is that of a parametric model with unpenalized estimators that asymptotically behave well under multiple rates of convergence. In this setting, we introduce a regularized estimator by means of the least squares approximation approach developed in \cite{wang1}. Section \ref{sec3} contains the discussion of the oracle properties of the introduced estimator. Section \ref{sec4} is devoted to the application of our methodology to diffusion processes related to stochastic differential equations. Furthermore, a selection procedure for tuning parameters, based on the standardized residuals of the discretized sample path, is proposed. In order to evaluate the performance of our estimator, in Section \ref{numres} a simulation study on a linear diffusion process is carry on to select the true underlying model.  In the same section we  test the prediction accuracy of our methodology  for four financial time series of daily closing stock prices of major tech companies: Google, Amazon, Apple and Microsoft. Besides, in the framework of ergodic diffusions, we compare the Bridge estimator introduced in this paper and the disjoint method developed in \cite{suz}. All the proofs of the oracle properties are collected in the last section.

\section{Adaptive Bridge-type estimation with multiple penalties}\label{sec2}
	
	Let us introduce the shrinking estimator in a general setup. We deal with a parameter of interest $\theta:=(\theta^1,...,\theta^m)',$ where $\theta^i:=(\theta_1^i,...,\theta_{p_i}^i), p_i\in\mathbb N,i=1,...,m.$ Furthermore, $\theta\in \Theta:=\Theta_1\times\cdots\times \Theta_m\subset \mathbb R^{\mathfrak{p}}, \mathfrak{p}:=\sum_{i=1}^mp_i,$ where $\Theta_i$ is a bounded convex subset of $\mathbb R^{p_i}.$ We denote by $\theta_0:=(\theta_0^1,...,\theta_0^m)',$ where $\theta_0^i:=(\theta_{0,1}^i,...,\theta_{0,p_i}^i), i=1,...,m,$ the true value of $\theta.$ 
	
	Assume that there exists  a loss function
	 $\theta\mapsto \mathfrak L_n(\theta)$ and $$\tilde\theta_n=(\tilde\theta_n^1,...,\tilde\theta_n^m)'\in\arg\min_\theta \mathfrak L_n(\theta).$$
	 Usually $\mathfrak L_n(\theta)$ is a (negative) log-likelihood function or the sum of squared residuals. Furthermore, suppose that $\tilde\theta_n$ admits a mixed-rates asymptotic behavior in the sense of \cite{rad}; that is for the asymptotic estimation of $\theta_0^i, i=1,...,m,$ is necessary to consider simultaneously different rates of convergence for $\tilde\theta_n^i, i=1,...,m$.

	We assume that $\theta_0$ is sparse (i.e., some components of $\theta_0$ are exactly zero). Let $p_i^0:=|\{j:\theta_{0,j}^i\neq 0\}|, i=1,...,m,$ and $\mathfrak p^0:=\sum_{i=1}^mp_i^0.$ Therefore, our target is the identification of the true model $\theta_0$ by exploiting a multidimensional random sample $(X_n)_{n}$ on the probability space $(\Omega,\mathcal F, P).$  
	
	In order to carry out simultaneously estimation and variable selection, we use a penalized approach involving suitable shrinking terms. Since we have to take into account the multiple asymptotic behavior of the non-regularized estimator $\tilde\theta_n$, we suggest to  penalize different sets of parameters with different norms. Therefore, the adaptive objective function with weighted $\ell^{q_i}$ penalties should be given by
	\begin{align}\label{eq:stLASSO}
	&\mathfrak L_n(\theta)+\left[\sum_{j=1}^{p_1}\lambda_{n,j}^1|\theta_j^1|^{q_1}+...+\sum_{j=1}^{p_m}\lambda_{n,j}^m|\theta_j^m|^{q_m}\right],\quad q_i\in(0,1], i=1,...,m,
	\end{align}
	where $(\lambda_{n,j}^i)_{n\geq 1}, j=1,...,p_i,i=1,...,m,$ are sequences of real positive random variable representing an adaptive amount of the shrinkage for  each element of $\theta^i.$
	The Bridge-type estimator is the minimizer of the objective function \eqref{eq:stLASSO}, which reduce to the LASSO-type estimator if $q_i=1$ for any $i$. This is a non-linear optimization problem which might be numerically challenging to solve. By resorting the  least squares approximation approach developed in \cite{wang1} and \cite{suz}, we can replace \eqref{eq:stLASSO} with a more tractable objective function. Indeed, if $\mathfrak L_n$ is twice differentiable with respect to $\theta,$ we have
	\begin{align*}\mathfrak L_n(\theta)&\simeq \mathfrak L_n(\tilde\theta_n)+(\theta-\tilde\theta_n)' \nabla_\theta \mathfrak L_n(\tilde\theta_n)+\frac12(\theta-\tilde\theta_n)'  \ddot{\mathfrak L}_n(\tilde\theta_n)(\theta-\tilde\theta_n)\\
	&=\mathfrak L_n(\tilde\theta_n)+\frac12(\theta-\tilde\theta_n)'  \ddot{\mathfrak L}_n(\tilde\theta_n)(\theta-\tilde\theta_n),
	\end{align*}
	where $\ddot{\mathfrak L}_n$ represents the Hessian matrix.
	Therefore we may minimize instead of \eqref{eq:stLASSO} the following objective function with multiple penalty terms 
	\begin{equation}\label{eq:stLASSO2}
	(\theta-\tilde\theta_n)'  \ddot{\mathfrak L}_n(\tilde\theta_n)(\theta-\tilde\theta_n)+\left[\sum_{j=1}^{p_1}\lambda_{n,j}^1|\theta_j^1|^{q_1}+...+\sum_{j=1}^{p_m}\lambda_{n,j}^m|\theta_j^m|^{q_m}\right].
	\end{equation}
	The gain of \eqref{eq:stLASSO2} is twofold: it reduces the computational complexity of \eqref{eq:stLASSO}; furthermore the  least squares term allows to unify many different types of penalized objective functions.

	Now, inspired by \eqref{eq:stLASSO2}, we are able to define the adaptive penalized estimator studied in this paper.
	
	\begin{definition} Let $\hat G_n$ be a $\mathfrak{p}\times \mathfrak{p}$  almost surely positive definite symmetric random matrix depending on $n.$ We define the adaptive Bridge-type estimator $\hat \theta_n:\mathbb{R}^{(n+1)\times d}\to\overline \Theta$ as follows 
	
	\begin{equation}\label{eq:LASSOtypest}
	\hat{\theta}_n=(\hat\theta_n^{1},...,\hat\theta_n^{m})'\in\arg\min_{\theta\in\overline\Theta}\mathcal{F}_n(\theta)
	\end{equation}
	where
	\begin{equation}\label{eq:LASSOtype}
	\mathcal{F}_n(\theta):=(\theta-\tilde{\theta}_n)'\hat G_n(\theta-\tilde{\theta}_n)+\left[\sum_{j=1}^{p_1}\lambda_{n,j}^1|\theta_j^1|^{q_1}+...+\sum_{j=1}^{p_m}\lambda_{n,j}^m|\theta_j^m|^{q_m}\right],
	\end{equation}
with $q_i\in(0,1], i=1,...,m.$
	\end{definition}
	Clearly, \eqref{eq:LASSOtype} reduces to \eqref{eq:stLASSO2} if $\hat G_n:=\ddot{ \mathfrak L}_n(\tilde\theta_n).$ 
	
	The estimator \eqref{eq:LASSOtypest} coincides with the estimator introduced in \cite{suz} for $m=1$ (actually, they are slightly different because we will require different asymptotic conditions on the matrix $\hat G_n$).
	Our scope is to generalize the approach developed in \cite{suz}, in order to extend the Bridge-type methodology to statistical parametric models with multiple rates of convergence. Thus, for this reason, the objective function \eqref{eq:LASSOtype} involves different norms, one for each set of parameters.
	For instance, in the case of ergodic diffusions the shrinking estimator \eqref{eq:LASSOtypest} is theoretical equivalent (see Theorem \ref{th:sde} below) to its counterpart studied in \cite{suz}, Section 7.1. Furthermore, when we apply the Bridge-type estimation procedure, it is necessary to work in the finite sample size setting. Therefore, our methodology, based on the joint estimation, is able to take into account the cross-correlations between the  variables of the model, by means of the random matrix $\hat G_n.$ This last issue is a crucial point in the statistical learning, where the correct identification of the dependent variables improves the performance of the fitted model. These features could be lost if we split the penalized estimation of the parameters.

\section{ Oracle properties}\label{sec3}
	For the sake of simplicity, hereafter, we assume $\theta_{0,j}^i\neq 0, j=1,...,p_i^0,$ for any $i=1,...,m.$   We deal with $r_n^i,i=1,...,m,$ representing sequences of positive numbers tending to 0 as $n\to\infty.$ ${\bf I}_m$ stands for the identity matrix of size $m$. Furthermore, we introduce the following matrices $$ A_n:= \text{diag}(r_n^1 {\bf I}_{p_1},...,r_n^m{\bf I}_{p_m}).$$

	We main assumptions in the paper are the following ones.
	
	\begin{itemize}
		\item[A1.] Let $\hat{\mathfrak D}_n:=A_n \hat G_n A_n$.   There exists a $\mathfrak{p}\times \mathfrak{p}$ positive definite symmetric random matrix $G$ such that $$\hat{\mathfrak D}_n\stackrel{p}{\longrightarrow}G.$$
		\item[A2.] The estimator $\tilde \theta_n$ is consistent; i.e. $$A_n^{-1}(\tilde{\theta}_n-\theta_0)=\left(\frac{1}{r_n^{1}}(\tilde\theta_n^1- \theta_0^1),...,\frac{1}{r_n^{m}}(\tilde\theta_n^m- \theta_0^m)\right)'=O_p(1).$$
		\item[A3.] The estimator $\tilde \theta_n$ is asymptotically normal; i.e. $$A_n^{-1}(\tilde{\theta}_n-\theta_0)=\left(\frac{1}{r_n^{1}}(\tilde\theta_n^1- \theta_0^1),...,\frac{1}{r_n^{m}}(\tilde\theta_n^m- \theta_0^m)\right)'\stackrel{d}{\longrightarrow} N_{\mathfrak p}(0,\mathfrak I),$$
		where $\mathfrak I:=\Gamma^{-1}$ and $\Gamma$ is a $\mathfrak{p}\times \mathfrak{p}$ positive definite symmetric matrix. 
	\end{itemize}
	
	The conditions A2 and A3 reveals the mixed-rates asymptotic behavior of the estimator $\tilde{\theta}_n.$ Actually, A3 could be replaced with a stronger condition involving the stable convergence to a mixed normal random variable (see, e.g., \cite{suz}).
	
	Let us denote by
	 $a_n^i:=\max\{\lambda_{n,j}^i, j\leq p_i^0\},$ $b_n^i:=\min\{\lambda_{n,j}^i, j> p_i^0\}$; we introduce the following conditions.
	 
\begin{itemize}
		\item[B1.]  $r_n^i a_n^i=O_p(1)$ for any $i=1,...,m.$		
		\item[B2.]  $r_n^i a_n^i=o_p(1)$ for any $i=1,...,m.$
		\item[B3.] $(r_n^i)^{2-q_i} b_n^i\stackrel{p}{\longrightarrow}\infty,$ for any $i=1,...,m.$ 
	\end{itemize}	  
	
	The main goal of this section is to argue on the theoretical features of the regularized statistical procedure arising from Definition 1; i.e. we are able to prove that the estimator $\hat\theta_n$ is asymptotically oracle. 

	\begin{theorem}[Consistency]\label{th:cons}
		Assume A1, A2 and B1, then $$A_n^{-1}(\hat\theta_n-\theta_0)=O_p(1).$$
	\end{theorem}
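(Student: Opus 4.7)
I follow the standard reparametrisation / bounded-minimiser argument. Set $u := A_n^{-1}(\theta - \theta_0)$ and
$$V_n(u) := \mathcal{F}_n(\theta_0 + A_n u) - \mathcal{F}_n(\theta_0).$$
Then $\hat u_n := A_n^{-1}(\hat\theta_n - \theta_0)$ minimises $V_n$, and since $V_n(0)=0$ it suffices to show that for every $\epsilon > 0$ there exists $C > 0$ with $P(\inf_{|u|=C} V_n(u) > 0) \geq 1 - \epsilon$ for all $n$ large; this confines $\hat u_n$ to $\{|u| \leq C\}$ with the same probability.

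Write $\tilde w_n := A_n^{-1}(\tilde\theta_n - \theta_0)$, which is $O_p(1)$ by A2. Since $\theta_0 + A_n u - \tilde\theta_n = A_n(u - \tilde w_n)$ and $\hat{\mathfrak D}_n = A_n \hat G_n A_n$, a direct expansion (using symmetry of $\hat{\mathfrak D}_n$) gives $V_n(u) = u'\hat{\mathfrak D}_n u - 2\,u'\hat{\mathfrak D}_n \tilde w_n + P_n(u)$, where
$$P_n(u) := \sum_{i=1}^m \sum_{j=1}^{p_i} \lambda_{n,j}^i \bigl( |\theta_{0,j}^i + r_n^i u_j^i|^{q_i} - |\theta_{0,j}^i|^{q_i} \bigr).$$
By A1, on an event of probability at least $1-\epsilon/2$ one has $\lambda_{\min}(\hat{\mathfrak D}_n) \geq \lambda_{\min}(G)/2$ and $\lambda_{\max}(\hat{\mathfrak D}_n) \leq 2\lambda_{\max}(G)$; on $|u|=C$ this yields
$$u'\hat{\mathfrak D}_n u - 2\,u'\hat{\mathfrak D}_n \tilde w_n \geq \tfrac{\lambda_{\min}(G)}{2}\, C^2 - 4\lambda_{\max}(G)\,|\tilde w_n|\,C = \tfrac{\lambda_{\min}(G)}{2}\, C^2 - O_p(C).$$

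For $P_n(u)$ I split by whether $j \leq p_i^0$ (true non-zero coefficient) or $j > p_i^0$ (true zero). The zero block contributes $\lambda_{n,j}^i (r_n^i)^{q_i} |u_j^i|^{q_i} \geq 0$, so it only helps. For the non-zero block, since $\theta_{0,j}^i \neq 0$ and $r_n^i \to 0$, for $n$ large enough $\theta_{0,j}^i + r_n^i u_j^i$ lies in a fixed compact interval around $\theta_{0,j}^i$ avoiding the origin, uniformly over $|u| \leq C$. There $x \mapsto |x|^{q_i}$ is $C^1$, so a mean-value estimate produces deterministic constants $K_j^i$ (depending only on $\theta_{0,j}^i$ and $q_i$) with
$$\bigl| \lambda_{n,j}^i \bigl( |\theta_{0,j}^i + r_n^i u_j^i|^{q_i} - |\theta_{0,j}^i|^{q_i} \bigr) \bigr| \leq K_j^i\, \lambda_{n,j}^i\, r_n^i\, |u_j^i| \leq K_j^i\, (r_n^i a_n^i)\, C.$$
Summing over $j \leq p_i^0$, $i \leq m$, and invoking B1, the non-zero block contributes $O_p(C)$ uniformly on $|u|=C$.

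Combining the two estimates, $\inf_{|u|=C} V_n(u) \geq (\lambda_{\min}(G)/2)\, C^2 - O_p(C)$, which is strictly positive with probability at least $1-\epsilon$ once $C$ is taken large enough; this proves $\hat u_n = O_p(1)$. The main technical point is the \emph{uniform} (not merely pointwise) control of $P_n$ on the sphere $|u|=C$: this is handled by the smoothness of $x \mapsto |x|^{q_i}$ away from the origin, available precisely because the standing hypothesis $\theta_{0,j}^i \neq 0$ for $j\leq p_i^0$ keeps the perturbed point off the singularity of $|\cdot|^{q_i}$, together with the calibration $r_n^i a_n^i = O_p(1)$ of assumption B1.
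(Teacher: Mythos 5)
Your localization argument has a genuine gap at the final step. You show $\inf_{|u|=C} V_n(u) > 0$ with high probability and conclude that this ``confines $\hat u_n$ to $\{|u|\leq C\}$''. That inference is valid only when $V_n$ is convex (for $q_i=1$ one can take the segment from $0$ to $\hat u_n$, which crosses the sphere at a point where convexity forces $V_n\leq \max(V_n(0),V_n(\hat u_n))\leq 0$, a contradiction). But the theorem covers $0<q_i\leq 1$, and for $q_i<1$ the penalty $|\theta|^{q_i}$ is non-convex, so positivity of $V_n$ on the sphere together with $V_n(\hat u_n)\leq V_n(0)=0$ does not exclude a global minimizer outside the ball: $V_n$ may dip below zero again beyond $|u|=C$. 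Since the estimator \eqref{eq:LASSOtypest} is defined as a \emph{global} minimizer over $\overline\Theta$, your argument only establishes the existence of a bounded \emph{local} minimizer (the Fan--Li scheme), not the claimed $A_n^{-1}(\hat\theta_n-\theta_0)=O_p(1)$ for $\hat\theta_n$ itself. The source of the problem is that your mean-value bound on $P_n(u)$ is purely local: it uses that $\theta^i_{0,j}+r_n^i u^i_j$ stays in a compact set avoiding the origin, which holds only for $|u|\leq C$. Outside that ball your pieces give at best the crude global bound $P_n(u)\geq -\sum_i p_i^0\, a_n^i \max_j|\theta_{0,j}^i|^{q_i}$, and since B1 allows $a_n^i\asymp 1/r_n^i\to\infty$, balancing this against the quadratic term yields only $|\hat u_n|=O_p\bigl((\min_i r_n^i)^{-1/2}\bigr)$, which is not enough.

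The paper closes exactly this hole by working globally from the start: from $\mathcal F_n(\hat\theta_n)\leq\mathcal F_n(\theta_0)$ and the elementary concavity inequality $|a|^{q}-|b|^{q}\geq -|b|^{q-1}|a-b|$, valid for \emph{all} $a\in\mathbb R$ whenever $b\neq 0$ and $0<q\leq 1$ (the zero block is nonnegative as you note), one gets the global linear lower bound $\sum_j\lambda_{n,j}^i\bigl(|\hat\theta_{n,j}^i|^{q_i}-|\theta_{0,j}^i|^{q_i}\bigr)\geq -p_i^0 K_i a_n^i|\hat\theta_n^i-\theta_0^i|$ with $K_i=\max_{j\leq p_i^0}|\theta_{0,j}^i|^{q_i-1}$, where crucially $a_n^i|\hat\theta_n^i-\theta_0^i|=(r_n^i a_n^i)\,|(r_n^i)^{-1}(\hat\theta_n^i-\theta_0^i)|=O_p(1)\cdot|u^i|$ by B1. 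Feeding this into the quadratic expansion and using eigenvalue bounds via $\|\hat{\mathfrak D}_n^{-1}\|$ and $\|\hat{\mathfrak D}_n\|$ produces a deterministic quadratic inequality in $x=|A_n^{-1}(\hat\theta_n-\theta_0)|$, whose solution is the explicit bound \eqref{eq:bound}; no sphere argument or localization is needed. To repair your proof, replace the mean-value estimate by this global inequality (your remaining estimates for the quadratic part are essentially the paper's, modulo the cosmetic point that $G$ in A1 is a \emph{random} positive definite matrix, so the constants $\lambda_{\min}(G),\lambda_{\max}(G)$ must be handled through an event of high probability rather than as fixed numbers).
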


	For the vectors $x^i=(x_1,...,x_{p_i})'$  we deal with the following notation: $x_{\star}^i:=(x_1,...,x_{p_i^0})',$ $x_{\bullet}^i:=(x_{p_i^0+1},...,x_{p_i})'.$
	
	\begin{theorem}[Selection consistency] \label{th:sc} If the assumptions A1, A2, B1 and B3 are satisfied,  we have that
		$$P(\hat \theta_{n\bullet}^{i}=0)\longrightarrow 1,\quad \,i=1,...,m,$$
		as $n\longrightarrow\infty.$
	\end{theorem}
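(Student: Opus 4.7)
The plan is to fix indices $i\in\{1,\dots,m\}$ and $j\in\{p_i^0+1,\dots,p_i\}$ (so that $\theta_{0,j}^i=0$) and show $P(\hat\theta_{n,j}^i\neq 0)\to 0$; a union bound over the finitely many such $(i,j)$ then yields the theorem. I would argue by contradiction on the event $E_n=\{\hat\theta_{n,j}^i\neq 0\}$: on $E_n$ the Bridge penalty $|\theta_j^i|^{q_i}$ is smooth in $\theta_j^i$ at the point $\hat\theta_n$, and by Theorem \ref{th:cons} the minimizer lies in the interior of $\overline\Theta$ with probability tending to one, so the coordinate-wise first-order condition must hold:
\begin{equation}\label{eq:propkkt}
2\bigl[\hat G_n(\hat\theta_n-\tilde\theta_n)\bigr]_j^i + \lambda_{n,j}^i q_i\,|\hat\theta_{n,j}^i|^{q_i-1}\sgn(\hat\theta_{n,j}^i)=0.
\end{equation}

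Next, I would bound the two sides of \eqref{eq:propkkt} at conflicting rates. Writing $\hat G_n=A_n^{-1}\hat{\mathfrak D}_n A_n^{-1}$ and noting that the $i$-th block of $A_n^{-1}v$ is $(r_n^i)^{-1}v^i$, the identity $r_n^i[\hat G_n v]_j^i=[\hat{\mathfrak D}_n A_n^{-1}v]_j^i$ applied with $v=\hat\theta_n-\tilde\theta_n$, combined with A1, A2, and Theorem \ref{th:cons}, gives $[\hat G_n(\hat\theta_n-\tilde\theta_n)]_j^i=O_p((r_n^i)^{-1})$. For the penalty side, Theorem \ref{th:cons} yields $\hat\theta_{n,j}^i=O_p(r_n^i)$, so for any $\varepsilon>0$ there exists $M>0$ with $P(|\hat\theta_{n,j}^i|\le Mr_n^i)\ge 1-\varepsilon$ for $n$ large; since $q_i-1\le 0$, on this event $|\hat\theta_{n,j}^i|^{q_i-1}\ge M^{q_i-1}(r_n^i)^{q_i-1}$, and together with $\lambda_{n,j}^i\ge b_n^i$ one obtains the lower bound $\tfrac{q_i}{2}M^{q_i-1}b_n^i(r_n^i)^{q_i-1}$ on the right-hand side of \eqref{eq:propkkt}.

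Multiplying \eqref{eq:propkkt} by $r_n^i$, on $E_n\cap\{|\hat\theta_{n,j}^i|\le Mr_n^i\}$ one arrives at $O_p(1)\ge \tfrac{q_i}{2}M^{q_i-1}(r_n^i)^{q_i}b_n^i$. Since $q_i\le 1\le 2-q_i$ and $r_n^i\downarrow 0$, we have $(r_n^i)^{q_i}b_n^i\ge (r_n^i)^{2-q_i}b_n^i\stackrel{p}{\longrightarrow}\infty$ by B3, which is incompatible with a left-hand side that is bounded in probability unless $P(E_n)\to 0$. Letting $\varepsilon\to 0$ and taking a union bound over the finite collection of indices $j>p_i^0$ delivers the claim $P(\hat\theta_{n\bullet}^i=0)\to 1$.

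The main obstacle I foresee is justifying the stationarity condition \eqref{eq:propkkt} rigorously when $q_i<1$, since $\mathcal F_n$ is then non-convex: one must argue that $\hat\theta_n$, although defined only as a global minimizer over $\overline\Theta$, still satisfies coordinate-wise first-order conditions on $E_n$. This should follow because $\mathcal F_n$ is smooth in the $\theta_j^i$ variable on the open set $\{\theta_j^i\neq 0\}$ and, by the consistency of $\hat\theta_n$ from Theorem \ref{th:cons}, the minimizer lies in the interior of $\overline\Theta$ with probability tending to one; once this is secured, the rest of the argument is a careful accounting of $O_p$ rates across the different blocks.
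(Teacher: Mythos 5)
Your proof is correct and follows essentially the same route as the paper: both argue via the first-order (KKT) condition at a nonzero null coordinate, bound the gradient term as $O_p((r_n^i)^{-1})$ through the identity $r_n^i[\hat G_n v]^i=[\hat{\mathfrak D}_n A_n^{-1}v]^i$ together with A1, A2 and Theorem \ref{th:cons}, and let B3 force a contradiction. The only cosmetic difference is that you lower-bound $|\hat\theta_{n,j}^i|^{q_i-1}$ using the consistency rate $\hat\theta_{n,j}^i=O_p(r_n^i)$ with an explicit $(M,\varepsilon)$ truncation, whereas the paper multiplies the KKT identity by $|r_n^i\hat\theta_{n,j}^i|^{1-q_i}$ and only needs boundedness of $\overline\Theta$; both reduce to the same comparison of $O_p(1)$ against $(r_n^i)^{2-q_i}b_n^i\stackrel{p}{\longrightarrow}\infty$.
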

	
	Theorem \ref{th:sc} allows to claim that with probability tending to 1, all of the zero parameters must be estimated as 0. Theorem \ref{th:cons} leads to the consistency of the estimators of the nonzero coefficients. Both theorems imply that the Bridge estimator \eqref{eq:LASSOtypest}  identifies the true model consistently.

  	In what follows we adopt the following notation:
	let $M$ be a partitioned $p_i\times p_j$ matrix, $1\leq i,j\leq m,$ 
	 $$M =\left(  \begin{matrix} 
	      M_{\star\star} &   M_{\star\bullet} \\
	        M_{\bullet\star} &   M_{\bullet\bullet} \\
	   \end{matrix}	\right),$$  
	   where the blocks are given by:
	   \begin{itemize}
	   \item  $M_{\star\star}=(m_{ij})_{1\leq i\leq p_i^0,1\leq j\leq p_j^0}$ is a $p_i^0\times p_j^0$ matrix;
	   \item $M_{\star\bullet}=(m_{ij})_{1\leq i\leq p_i^0,p_j^0< j\leq p_j},$ is a $p_i^0\times (p_j-p_j^0)$ matrix;
	   \item $M_{\bullet\star}=(m_{ij})_{p_{i}^0< i\leq p_i ,1\leq j\leq p_j^0}$ is a  $(p_i-p_i^0)\times p_j^0$ matrix;
	   \item  $M_{\bullet\bullet}=(m_{ij})_{p_{i}^0< i\leq p_i,p_j^0<j\leq p_j}$ is a  $(p_i-p_i^0)\times(p_j-p_j^0)$ matrix.
	   \end{itemize}
	   
	     Moreover, we take into account the following assumption representing a special case of the condition A1.   
	      \begin{itemize}
		\item[C1.]  There exist $p_i\times p_i$ positive definite symmetric random matrices $G^{ii}, i=1,2,...,m.$ such that $$\hat{\mathfrak D}_n\stackrel{p}{\longrightarrow}G:= \text{diag}(G^{11},G^{22},...,G^{mm}).$$
	\end{itemize}

	   Let us assume C1 and introduce the following $\mathfrak p^0\times \mathfrak p$ matrix
 	   $$\mathfrak G:= \left(  \begin{matrix} 
	          \mathfrak G_1&0&0& \cdots&0\\
	        0 &  \mathfrak G_2& 0& \cdots &0 \\
	             \vdots &\vdots&
\vdots & \vdots&\vdots \\
	     0&0&0&\cdots &       \mathfrak G_m
	      \end{matrix}\right),$$
	      where $  \mathfrak G_i:=({\bf I}_{p_i^0}\,\,\, ( G_{\star\star}^{ii})^{-1} G^{ii}_{\star\bullet}), i=1,2,...,m.$ 
	      	      Now, we are able to prove the asymptotic normality of the Bridge-type estimator and its efficiency with respect to the true subset model.

	\begin{theorem}[Asymptotic normality]\label{th:an}
		Under the assumptions C1,A2, B2 and B3, we have that
		\begin{equation}\label{oraclegen}\left(\frac{1}{r_n^1}(\hat\theta_n^1-\theta_0^1)_\star,...,\frac{1}{r_n^m}(\hat\theta_n^m-\theta_0^m)_\star\right)'-   \mathfrak G\left\{A_n^{-1}(\tilde\theta_n-\theta_0)\right\}  \stackrel{p}{\longrightarrow}0,\end{equation}
		as $n\longrightarrow\infty.$
		Furthermore, adding A3, we obtain 
				\begin{equation}\label{oracleasnorm}
				\left(\frac{1}{r_n^1}(\hat\theta_n^1-\theta_0^1)_\star,...,\frac{1}{r_n^m}(\hat\theta_n^m-\theta_0^m)_\star\right)' \stackrel{d}{\longrightarrow}N_{\mathfrak p^0}\left(0, \mathfrak G\,\mathfrak I\,\mathfrak G'\right),
				\end{equation}
				as $n\longrightarrow\infty,$ and if $G=\Gamma,$  one has that
				$$\mathfrak G\,\mathfrak I\,\mathfrak G':=\text{diag}\left((\Gamma_{\star\star}^{11})^{-1},(\Gamma_{\star\star}^{22})^{-1},...,(\Gamma_{\star\star}^{mm})^{-1}\right).$$
	\end{theorem}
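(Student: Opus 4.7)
The plan is to combine the selection consistency of Theorem~\ref{th:sc} with a first-order (KKT) analysis on the event where all null components are correctly shrunk to zero, and then pass to the limit in the rescaled normal equations using C1, B2 and A3. Throughout, write $u_n:=\hat\theta_n-\theta_0$ and $w_n:=\tilde\theta_n-\theta_0$, and set
$U_n^i:=(r_n^i)^{-1}(\hat\theta_n^i-\theta_0^i)_\star$, $W_{n,\star}^i:=(r_n^i)^{-1}(\tilde\theta_n^i-\theta_0^i)_\star$, $W_{n,\bullet}^i:=(r_n^i)^{-1}(\tilde\theta_n^i-\theta_0^i)_\bullet$, which are all $O_p(1)$ by Theorem~\ref{th:cons} and A2.

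First, by Theorem~\ref{th:sc} the event $E_n:=\{\hat\theta_{n\bullet}^i=0,\ i=1,\dots,m\}$ has probability tending to one, so $u_{n\bullet}^i=0$ on $E_n$ (recall $\theta_{0\bullet}^i=0$). On $E_n$ and for $n$ large, Theorem~\ref{th:cons} ensures $\hat\theta_{n\star,j}^i\to\theta_{0\star,j}^i\neq0$ in probability, so the penalty $|\cdot|^{q_i}$ is smooth at $\hat\theta_{n\star,j}^i$. Differentiating $\mathcal{F}_n$ with respect to $\theta_{\star,j}^i$ and evaluating at $\hat\theta_n$ yields the first-order condition, which in block form reads
\begin{equation*}
\hat G_{n,\star\star}(u_{n\star}-w_{n\star})-\hat G_{n,\star\bullet}w_{n\bullet}+\tfrac12 R_n=0,
\end{equation*}
where $R_n$ collects the penalty derivatives $\lambda_{n,j}^i q_i|\hat\theta_{n\star,j}^i|^{q_i-1}\operatorname{sgn}(\hat\theta_{n\star,j}^i)$.

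Next, rescale the $i$-th block of this equation by $r_n^i$ and insert the identity $r_n^j(r_n^j)^{-1}$ at every free parameter. Under C1, $r_n^i r_n^j(\hat G_n^{ij})_{\star\star}\to G_{\star\star}^{ii}\mathbf 1_{\{i=j\}}$ and $r_n^i r_n^j(\hat G_n^{ij})_{\star\bullet}\to G_{\star\bullet}^{ii}\mathbf 1_{\{i=j\}}$ in probability, so all cross-index terms are $o_p(1)\cdot O_p(1)=o_p(1)$. For the penalty contribution, $\|r_n^i R_n^i\|\le r_n^i a_n^i\cdot q_i\max_{j\le p_i^0}|\hat\theta_{n\star,j}^i|^{q_i-1}=o_p(1)$ by B2 combined with the consistency of $\hat\theta_{n\star}^i$. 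This gives, for each $i$,
\begin{equation*}
G_{\star\star}^{ii}\bigl(U_n^i-W_{n,\star}^i\bigr)-G_{\star\bullet}^{ii}W_{n,\bullet}^i=o_p(1),
\end{equation*}
and inverting $G_{\star\star}^{ii}$ produces $U_n^i=\mathfrak G_i\bigl(W_{n,\star}^i,W_{n,\bullet}^i\bigr)'+o_p(1)$, which is precisely \eqref{oraclegen}. The convergence \eqref{oracleasnorm} then follows by Slutsky from A3, since $(U_n^1,\dots,U_n^m)'-\mathfrak G A_n^{-1}w_n=o_p(1)$ and $A_n^{-1}w_n\stackrel{d}{\to}N_\mathfrak{p}(0,\mathfrak I)$.

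Finally, when $G=\Gamma$, the matrix $\Gamma$ is itself block diagonal by C1, so both $\Gamma$ and $\mathfrak I=\Gamma^{-1}$ split blockwise, and $\mathfrak G\mathfrak I\mathfrak G'=\operatorname{diag}_i\bigl(\mathfrak G_i(\Gamma^{ii})^{-1}\mathfrak G_i'\bigr)$. A short Schur-complement computation with $\mathfrak G_i=(\mathbf I_{p_i^0},(\Gamma_{\star\star}^{ii})^{-1}\Gamma_{\star\bullet}^{ii})$ and the standard partitioned inverse of $\Gamma^{ii}$ shows $\mathfrak G_i(\Gamma^{ii})^{-1}=((\Gamma_{\star\star}^{ii})^{-1},0)$, hence $\mathfrak G_i(\Gamma^{ii})^{-1}\mathfrak G_i'=(\Gamma_{\star\star}^{ii})^{-1}$, proving the diagonal form of the limit covariance. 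The main technical obstacle is the bookkeeping in the rescaling step: one must simultaneously track that the off-diagonal $(i\neq j)$ Hessian blocks vanish at the joint rate $r_n^i r_n^j$ (so the contribution $O_p(1)\cdot o_p(1)$ is negligible even though $W_n$ is not $o_p(1)$) and that the penalty derivatives vanish uniformly through B2, before one is entitled to invert $G_{\star\star}^{ii}$.
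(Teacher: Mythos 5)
Your proposal is correct and follows essentially the same route as the paper's proof: both restrict to the asymptotically certain event supplied by Theorems \ref{th:cons}--\ref{th:sc}, write the first-order conditions in the $\star$-components only, rescale so that C1 annihilates the off-diagonal Hessian blocks ($r_n^i r_n^j \hat G_n^{ij}\to 0$ for $i\neq j$) and B2 together with consistency of the nonzero components annihilates the penalty gradient, invert $G_{\star\star}^{ii}$, and conclude via Slutsky and the partitioned inverse of $\Gamma^{ii}$. The only cosmetic differences are that the paper folds $\min_{j\le p_i^0}|\hat\theta_{n,j}^i|>0$ and $\det(\hat G^{ii}_{\star\star})>0$ explicitly into its event $B_n^i$ and carries the complement along with indicators (which is the rigorous version of your ``smooth for $n$ large'' remark), and that your intermediate identity $\mathfrak G_i(\Gamma^{ii})^{-1}=\left((\Gamma^{ii}_{\star\star})^{-1}\ \ 0\right)$ is a slightly tidier path to the same Schur-complement computation the paper performs by direct expansion.
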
 
	
	\begin{remark}	
		A possible reasonable choice of the sequences of adaptive amounts is the following one (see \cite{zou})
\begin{equation}\label{eq:tuningzou}
\lambda_{n,j}^i=\frac{\alpha_n^i}{|\tilde\theta_{n,j}^i|^{\delta_i}},\quad i=1,2,...,m,
\end{equation}
where $(\alpha_n^i)_{n\geq 1}$ represents a sequence of positive real numbers satisfying the following conditions
\begin{equation}\label{eq:condtun}
r_n^i \alpha_n^i\longrightarrow 0,\quad (r_n^i)^{2-q_i-\delta_i}\alpha_n^i\longrightarrow\infty,
\end{equation}
with $\delta_i>1-q_i.$ Under the conditions \eqref{eq:condtun}, the assumptions B1-B3 fulfill.
\end{remark}
	\begin{remark}
	It is worth to mention that the estimator $\hat\theta_n$ is oracle when  the dimension $\mathfrak p$ and the the sparsity dimension $\mathfrak p^0$ are finite and fixed. We are not considering the high-dimensional  setting; i.e. $\mathfrak p\to \infty$ (and simultaneously $\mathfrak p^0\to \infty$ ) as $n\to\infty.$ Penalized statistics when number of parameters diverges has been studied, for instance, in \cite{peng}. The study of the Bridge-type estimator \eqref{eq:LASSOtypest} in the high-dimensional case represents a future research topic.
	\end{remark}

\section{Application to Stochastic Differential Equations}\label{sec4}
	\subsection{Ergodic diffusions} Let  $(\Omega,\cor F,{\bf F}=(\cor F_t)_{t\geq 0},P)$ be a filtered complete probability space.
	Let us consider a $d$-dimensional solution process $X:=(X_t)_{t\geq 0}$ to  the following stochastic differential equation (SDE)
	\begin{equation}\label{sde}
	\de X_t=b(X_{t},\alpha)\de t+\sigma(X_{t},\beta)\de W_t,\quad X_0=x_0,
	\end{equation}
	where $x_0$ is a deterministic initial point, $b:\mathbb{R}^d\times  \Theta_{\alpha}\to \mathbb{R}^d$ and $\sigma:\mathbb{R}^d \times \Theta_\beta\to \mathbb{R}^d\otimes \re^r$ are Borel known functions (up to $\alpha$ and $\beta$) and  $(W_t)_{t\geq 0}$ is a $r$-dimensional standard $\cor F_t$-Brownian motion. Furthermore, $\alpha\in\Theta_\alpha\subset \re^{p_1},\beta\in\Theta_\beta\subset \re^{p_2},p_1, p_2\in\mathbb N,$ are unknown parameters where $\Theta_\alpha,\Theta_\beta$ are compact convex sets. Let $\theta:=(\alpha,\beta)\in\Theta:=\Theta_\alpha\times\Theta_\beta$ and denote by $\theta_0:=(\alpha_0,\beta_0)$ the true value of $\theta$. Let us assume that $\theta_0\in$ Int$(\Theta)$ and $0\in\mathbb R^{p_1+p_2}$ belongs to $\Theta.$ The stochastic differential equation $X$ represents a sparse parametric model; that is $\theta_0$ has a sparse representation.

	The sample path of $X$ is observed only at $n+1$ equidistant discrete times $t_i^n$, such that $t_i^n-t_{i-1}^n=\Delta_n<\infty$ for $i=1,...,n,$ (with $t_0^n=0$). Therefore the data are the discrete observations of the sample path of $X,$ that we represent by ${\bf X}_n:=(X_{t_i^n})_{0\leq i\leq n}.$ Let $p$ an integer with $p\geq 2,$ the asymptotic scheme adopted in this paper is the following: $n\Delta_n\longrightarrow \infty$, $\Delta_n\longrightarrow 0$ and $n\Delta_n^p\longrightarrow 0$ as $n\to \infty$ and there exists $\epsilon \in(0,(p-1)/p)$ such that $n^\epsilon\leq n\Delta_n$ for large $n$.
	
	$X$ satisfies some mild regularity conditions (see, e.g., \cite{kess} or
\cite{yospol}). For instance, the functions $b$ and $\sigma$ are smooth, $\Sigma(x,\beta):=\sigma \sigma'(x,\beta)$ is supposed invertible and $X$ is an ergodic diffusion.; i.e. there exists a unique invariant probability measure $\mu = \mu_{\theta_0}$ such that for any bounded measurable function $g : \mathbb R^d\to  \mathbb R$, the convergence 
 $\frac1T\int_0^T g(X_t)\de t\stackrel{p}{\longrightarrow}\int_{\mathbb R^d}g(x)\mu(\de x).$
 
 We are interested to the estimation of $\theta$ as well as the correct identification of the zero coefficients by using the data ${\bf X}_n.$ For this reason, we apply the Bridge-type estimator \eqref{eq:LASSOtypest} in this setting. We assume that an initial estimator $\tilde\theta_n:=(\tilde\alpha_n,\tilde\beta_n)':\mathbb R^{(n+1)\times d}\longrightarrow \overline \Theta$ of $\theta $ satisfies  the following asymptotic properties:

\begin{itemize}
\item[(i)] $\tilde\alpha_n$ is $\sqrt{n\Delta_n}$-consistent while $\tilde\beta_n$ is $\sqrt{n}$-consistent; i.e. $(\sqrt{n\Delta_n}(\tilde\alpha_n-\alpha_0),\sqrt n(\tilde\beta_n-\beta_0))'=O_p(1);$
 \item[(ii)]   $\tilde\theta_n$ is asymptotically normal; i.e
 $$(\sqrt{n\Delta_n}(\tilde\alpha_n-\alpha_0),\sqrt n(\tilde\beta_n-\beta_0))'\stackrel{d}{\longrightarrow}N_{p_1+p_2}(0,\text{diag}((\Gamma^{11})^{-1},(\Gamma^{22})^{-1})),$$
where
$$ \Gamma^{11}:=\int_{\re^d}\left(\partial_\alpha b(\alpha_0,x)\right)' \Sigma^{-1}(\beta_0,x)\partial_\alpha b(\alpha_0,x)
\mu(\de x)\,,$$
$$ \Gamma^{22}:=\frac12\int_{\re^d}\text{tr}\left[(\partial_\beta \Sigma)\Sigma^{-1}(\partial_\beta \Sigma)\Sigma^{-1}(\beta_0,x)\right]
\mu(\de x),$$
where $\partial_\alpha:=(\frac{\partial}{\partial_{\alpha_1}},...,\frac{\partial}{\partial_{\alpha_{p_1}}})', \partial_\beta:=(\frac{\partial}{\partial_{\beta_1}},...,\frac{\partial}{\partial_{\beta_{p_2}}})'.$ We assume the integrability and the non-degeneracy of $\Gamma^{11}$ and $\Gamma^{22}.$
\end{itemize}
Therefore, from (i) and (ii) emerge that the estimator $\tilde\theta_n$ works in a mixed-rates asymptotic regime with two different rates, $\sqrt{n\Delta_n}$ and $\sqrt n,$ for the two groups of parameters $\alpha$ and $\beta.$ The assumptions A2 and A3 hold by setting $A_n= \text{diag}(1/(\sqrt{n\Delta_n}){\bf I}_{p_1},1/\sqrt n{\bf I}_{p_2})$.

Let $q_1,q_2\in(0,1],$ the objective function \eqref{eq:LASSOtype} becomes
\begin{equation}
\mathcal{F}_n(\theta):=(\theta-\tilde{\theta}_n)'\hat G_n(\theta-\tilde{\theta}_n)+\sum_{j=1}^{p_1}\lambda_{n,j}|\alpha_j|^{q_1}+\sum_{k=1}^{p_2}\gamma_{n,k}|\beta_k|^{q_2},
\end{equation}
where $\hat G_n$ is a $(p_1+p_2)\times (p_1+p_2)$ matrix assumed to be symmetric and positive definite and such that $A_n\hat G_nA_n\stackrel{p}{\longrightarrow}\text{diag}(\Gamma^{11},\Gamma^{22})$ (condition C1 fulfills). In this framework, we consider sequences $ \lambda_{n,j} $ and $ \gamma_{n,k} $ as in \eqref{eq:tuningzou}; i.e.  they turn out  as follows
	\begin{align*}
	\lambda_{n,j} = \frac{\lambda_{n,0}}{|\tilde \alpha_{n,j}|^{\delta_1}},
	\qquad 
	\gamma_{n,k} =  \frac{\gamma_{n,0}}{|\tilde \beta_{n,k}|^{\delta_2}}
	\qquad j = 1, , \ldots, p_1, \,\,k = 1, \ldots, p_2,
	\end{align*}
	where the exponents $ \delta_1 $ and $ \delta_2 $ are such that $ \delta_i > 1-q_i,1,2.$ We assume that
	$ \lambda_{n,0} $ and $ \gamma_{n,0} $ are deterministic sequences satisfying the conditions \eqref{eq:condtun}; that is
\begin{equation*}\frac{\lambda_{n,0}}{\sqrt{n\Delta_n}}\longrightarrow0,\quad (n\Delta_n)^{\frac{\delta_1-2+q_1}{2}}\lambda_{n,0}\longrightarrow\infty,\end{equation*}
and
\begin{equation*}\frac{\gamma_{n,0}}{\sqrt{n}}\longrightarrow0,\quad n^{\frac{\delta_2-2+q_2}{2}}\gamma_{n,0}\longrightarrow\infty,\end{equation*}
as $n\to\infty.$ Finally, the Bridge-type estimator for the stochastic differential equation \eqref{sde} becomes
\begin{equation}\label{eq:estsde}
\hat\theta_n:=(\hat\alpha_n,\hat\beta_n)'\in\arg\min_{\theta\in\overline \Theta} \mathcal F_n(\theta).
\end{equation}

In literature appeared different estimators for ergodic diffusions satisfying the asymptotic properties (i) and (ii). For instance, the quasi-maximum-likelihood estimator, the quasi-Bayesian estimator (see, \cite{yosjma}, \cite{kess}, \cite{yu}, \cite{yospol}, \cite{uchad}, \cite{uchbay}) and the hybrid multistep estimator (see \cite{kam}).

For $p=2,$ a suitable loss function could be the negative quasi-log-likelihood function \begin{align}\label{qlik}
\ell_n({\bf X}_n,\theta)
&:=\frac12\sum_{i=1}^n\Bigg\{\log\text{det}(\Sigma( X_{t_{i}^n},\beta))
\notag\\
&\quad+\frac{1}{\Delta_n}(X_{t_i^n}-X_{t_{i-1}^n}-\Delta_n b( X_{t_{i}^n},\alpha))'\Sigma^{-1}( X_{t_{i}^n},\beta)(X_{t_i^n}-X_{t_{i-1}^n}-\Delta_n b( X_{t_{i}^n},\alpha))\Bigg\}.
\end{align}
Therefore, a possible choice of $\hat G_n$ is the Hessian matrix $\ddot \ell_n({\bf X}_n,\theta)$ and $$\tilde\theta_n^{QL}\in \arg\min_{\overline \Theta} \ell_n({\bf X}_n,\theta)$$ represents the quasi-maximum likelihood estimator (QMLE). For more details on the quasi-likelihood analysis for stochastic differential equations, the reader can consult, for instance, \cite{kess} and \cite{yospol}.

Now, we are able to present the oracle properties for the Bridge-type estimator introduced in the framework of sparse ergodic diffusions. We also argue about the boundedness of the estimator which is useful for the moment convergence. Under the assumptions of Section \ref{sec3}, the next theorem concerns these issues in the case of initial estimator equals to $\tilde\theta_n^{QL}.$ Clearly, the statement of the theorem holds true also if $\tilde\theta_n$ is the quasi-Bayesian estimator or the hybrid multistep estimator.

\begin{theorem}\label{th:sde}
If $\tilde\theta_n=\tilde\theta_n^{QL},$ the Bridge-type estimator \eqref{eq:estsde} has the following properties:
\begin{itemize}
\item (Consistency) $ (\sqrt{n\Delta_n}(\hat\alpha_n-\alpha_0),\sqrt n(\hat\beta_n-\beta_0))=O_p(1);$
\item(Selection consistency) $P(\hat\alpha_{n\bullet}=0)\longrightarrow 1$ and $P(\hat\beta_{n\bullet}=0)\longrightarrow 1$;
\item (Asymptotic normality) $$(\sqrt{n\Delta_n}(\tilde\alpha_n-\alpha_0)_\star,\sqrt n(\tilde\beta_n-\beta_0)_\star)\stackrel{d}{\longrightarrow}N_{p_1^0+p_2^0}(0,\text{diag}((\Gamma_{\star\star}^{11})^{-1},(\Gamma_{\star\star}^{22})^{-1}));$$
\item (Uniformly $L^q$-boundedness) if $\sup_n \mathbb E[||\hat{\mathfrak D}_n||^q]<\infty$  and $\sup_n \mathbb E[||\hat{\mathfrak D}_n^{-1}||^q]<\infty$ for all $q\geq 1,$ we have that
$$\sup_n \mathbb E[|A_n^{-1}(\hat\theta_n-\theta_0)|^q]<\infty.$$
\end{itemize}
\end{theorem}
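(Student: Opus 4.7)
The first three assertions follow by specializing Theorems \ref{th:cons}, \ref{th:sc} and \ref{th:an}. Take $A_n=\operatorname{diag}((n\Delta_n)^{-1/2}\mathbf{I}_{p_1},n^{-1/2}\mathbf{I}_{p_2})$: hypotheses (i) and (ii) on $\tilde\theta_n^{QL}$ are exactly A2 and A3 with $\Gamma=\operatorname{diag}(\Gamma^{11},\Gamma^{22})$, and with $\hat G_n=\ddot\ell_n({\bf X}_n,\tilde\theta_n^{QL})$ the classical convergence of the scaled quasi-likelihood Hessian under the high-frequency scheme (see e.g.~\cite{kess,yospol}) yields $\hat{\mathfrak D}_n\stackrel{p}{\longrightarrow}\operatorname{diag}(\Gamma^{11},\Gamma^{22})$, giving C1. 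For the adaptive weights in \eqref{eq:tuningzou}, on active components $\tilde\alpha_{n,j}\to\alpha_{0,j}\neq 0$ gives $\lambda_{n,j}=O_p(\lambda_{n,0})$ and so $r_n^1 a_n^1\to 0$ (B1, B2); on inactive components $\tilde\alpha_{n,j}=O_p((n\Delta_n)^{-1/2})$ gives $\lambda_{n,j}\asymp\lambda_{n,0}(n\Delta_n)^{\delta_1/2}$ and hence $(r_n^1)^{2-q_1}b_n^1\asymp\lambda_{n,0}(n\Delta_n)^{(\delta_1+q_1-2)/2}\to\infty$ (B3). Analogous computations dispose of $\beta$, so consistency, selection consistency and asymptotic normality follow from the abstract theorems.

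\textbf{Uniform $L^q$-bound: strategy.} The essential additional input is the polynomial-type large deviation inequality (PLDI) for the QMLE of ergodic diffusions (\cite{yosjma,yospol}), which yields $\sup_n\mathbb E[|A_n^{-1}(\tilde\theta_n^{QL}-\theta_0)|^q]<\infty$ for every $q\ge 1$. Decompose $A_n^{-1}(\hat\theta_n-\theta_0)=u_n+v_n$, with $u_n:=A_n^{-1}(\hat\theta_n-\tilde\theta_n^{QL})$ and $v_n:=A_n^{-1}(\tilde\theta_n^{QL}-\theta_0)$; $v_n$ is already $L^q$-controlled, so only $u_n$ needs to be bounded. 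The comparison $\mathcal F_n(\hat\theta_n)\le\mathcal F_n(\theta_0)$, which exploits the sparsity of $\theta_0$, reads
\[
 u_n'\,\hat{\mathfrak D}_n\,u_n+\Pi_n(\hat\theta_n)\le v_n'\,\hat{\mathfrak D}_n\,v_n+\Pi_n(\theta_0),
\]
where $\Pi_n$ is the penalty in \eqref{eq:LASSOtype}. On the high-probability event $\Omega_n:=\{\hat\alpha_{n\bullet}=0,\hat\beta_{n\bullet}=0\}$ the penalty difference reduces to the active block, where a first-order expansion of $|x|^{q_i}$ away from zero together with the tuning conditions $r_n^1\lambda_{n,0}\to 0$ and $r_n^2\gamma_{n,0}\to 0$ give $|\Pi_n(\theta_0)-\Pi_n(\hat\theta_n)|=o_p(1)\,|u_n+v_n|$. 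An absorption argument using the assumed moment bounds on $\hat{\mathfrak D}_n^{\pm 1}$ and H\"older's inequality then yields $\sup_n\mathbb E[|u_n|^q\mathbf 1_{\Omega_n}]<\infty$, and the triangle inequality closes the argument.

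\textbf{Main obstacle.} Making the previous step genuinely uniform in $n$ is the delicate point: since $\lambda_{n,0}$ is allowed to diverge, naive bounds on $\Pi_n(\theta_0)$ blow up, and the cancellation against $\Pi_n(\hat\theta_n)$ is effective only on the selection-consistency event $\Omega_n$. One therefore needs to strengthen Theorem \ref{th:sc} to a polynomial decay $\mathbb P(\Omega_n^c)=O(n^{-L})$ for every $L$, which is available as a consequence of the PLDI combined with the strict positivity of the nonzero parameters and the rate B3; on $\Omega_n^c$ the compactness of $\overline\Theta$ then makes the contribution negligible at every polynomial order. This parallels the moment-convergence analysis of \cite{suz} in the single-rate case, extended here through the block-diagonal structure imposed by C1 so that the two asymptotic rates $\sqrt{n\Delta_n}$ and $\sqrt n$ decouple in the leading-order terms.
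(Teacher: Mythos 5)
Your reduction of the first three bullets to Theorems \ref{th:cons}, \ref{th:sc} and \ref{th:an} is exactly the paper's route (the verification of A2, A3, C1 and of B1--B3 via the conditions on $\lambda_{n,0},\gamma_{n,0}$ is carried out in Section \ref{sec4} before the theorem is stated), so that part stands. The gap is in the uniform $L^q$-bound. Your argument hinges on strengthening Theorem \ref{th:sc} to $P(\Omega_n^c)=O(n^{-L})$ for every $L$, which you assert is ``available as a consequence of the PLDI.'' It is not: Theorem \ref{th:sc} as proved gives $P(\Omega_n^c)\to 0$ with no rate, and a polynomial rate at all orders would require a quantitative version of the KKT argument, with tail bounds for $|2r_n^i\hat G_n^i(j)A_nA_n^{-1}(\hat\theta_n-\tilde\theta_n)|$ and for the random weights $b_n^i$ --- none of which you supply, and which does not follow from the PLDI for the initial estimator alone, since it concerns the penalized estimator $\hat\theta_n$ itself. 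Two further soft spots: your intermediate estimate $|\Pi_n(\theta_0)-\Pi_n(\hat\theta_n)|=o_p(1)\,|u_n+v_n|$ is an in-probability statement, which cannot be fed into a \emph{uniform-in-$n$} moment bound without additional domination; and you never control the moments of the adaptive weights on the active set, i.e.\ of $a_n^i=\alpha_n^i\max_{j\le p_i^0}|\tilde\theta_{n,j}^i|^{-\delta_i}$, whose negative moments of the QMLE are exactly where the large deviation machinery of \cite{yospol} must be invoked.

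The detour through $\Omega_n$ is in fact unnecessary, which is why the paper's proof is short. The bound \eqref{eq:bound} from the proof of Theorem \ref{th:cons},
\begin{equation*}
|A_n^{-1}(\hat\theta_n-\theta_0)|\leq ||\hat{\mathfrak D}_n^{-1}||\left[2 ||\hat{\mathfrak D}_n||\, |A_n^{-1}(\tilde\theta_n-\theta_0)|+\sum_{i=1}^m p_i^0 K_i r_n^i  a_n^i\right],
\end{equation*}
holds pathwise on the whole sample space: in the comparison $\mathcal F_n(\hat\theta_n)\le\mathcal F_n(\theta_0)$ the inactive penalty terms of $\hat\theta_n$ are nonnegative and simply discarded, the inactive components of $\theta_0$ contribute nothing, and the active difference is bounded through $a^{q}-b^{q}\le a^{q-1}(a-b)$ for $0\le b\le a$, $q\in(0,1]$. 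In particular neither $b_n^i$ nor $\Pi_n(\theta_0)$ ever enters, so the blow-up you call the ``main obstacle'' does not arise --- it is an artifact of your event-splitting. The paper then raises \eqref{eq:bound} to the power $q$, applies Cauchy--Schwarz/H\"older with the assumed moment bounds on $\hat{\mathfrak D}_n$ and $\hat{\mathfrak D}_n^{-1}$, and closes by bounding $\sup_n\mathbb E\,|A_n^{-1}(\tilde\theta_n^{QL}-\theta_0)|^{4q}$ and $\sup_n\mathbb E\,[p_i^0K_ir_n^ia_n^i]^q$ via the polynomial-type large deviation inequality (25) and Proposition 1 of \cite{yospol}. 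If you want to salvage your route you must actually prove the polynomial decay of $P(\Omega_n^c)$, but even then you would only recover, by a longer path, what the unconditional inequality \eqref{eq:bound} already gives.
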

Theorem \ref{th:sde} represents a generalization of the results previously obtained in \cite{deg}. 

For the identiification of the true model, it is also possible to use AIC criterion as discussed in \cite{uch}). Nevertheless, as pointed out in \cite{iacus}, it is necessary to specify some parametric models.

\subsection{Tuning parameter selection}

	Consider a diffusion process as in \eqref{sde}. 
 In the linear regression problem, the regularized estimates are obtained by choosing the tuning parameters  by means of a cross-validation procedure. In our framework, we propose a data driven technique for choosing the tuning parameters for the penalized estimation problem \eqref{eq:estsde}. 
		
	Consider the Euler discretization of the solution of \eqref{sde}
	\begin{equation}\label{eq:sde-euler}
		X_{t_{i+1}^n} = X_{t_i^n} + b(X_{t_i^n}, \alpha) \Delta_n +
		\sigma(X_{t_i^n}, \beta)  (W_{t_{i+1}^n}-W_{t_{i}^n})	
	\end{equation}
	where $ t_i $ and $ \Delta_n $ are specified as above. The ``standardized residuals''
	are then defined as
	\begin{equation}\label{eq:residuals}
		r_{t_i^n} =
		\Delta_n^{- 1/2}\Sigma^{-1/2}(X_{t_i^n}, \beta) (X_{t_{i+1}^n} - X_{t_i^n} - \Delta_n b(X_{t_i^n}, \alpha) ) \qquad i = 1,\ldots, n.
	\end{equation}
	The residuals $ r_{t_i^n} $ are $  N_d(0_d,{\bf I}_d) $
	and conditionally independent. The idea is to find the tuning parameters in such a way that the residuals fit best to a white noise scheme.
	
	Given a series of observations $ {\bf X}_n $ and some estimates of the parameters 
	$ \hat \alpha $ and $ \hat \beta $ the residuals can be estimated as 
	\begin{equation}\label{eq:resid-est}
	\hat r_{t_i^n} = 
		\Delta_n^{- 1/2}
	\Sigma^{-1/2}(X_{t_i^n}, \hat \beta) (X_{t_{i+1}^n} - X_{t_i^n} - \Delta_n b(X_{t_i^n}, \hat \alpha) ) \qquad i = 1,\ldots, n
	\end{equation}
	Let $ \psi := (q_1, q_2, \lambda_n, \gamma_n, \delta_1, \delta_2) $ be the vector of 
	tuning parameters varying in some suitable parameter space $ \Psi \subset \mathbb R^6 $.
	Beside the penalized estimates will depend on $ \psi $ and consequently also the 
	residuals will. We set $\hat r_{t_i^n}  = \hat r_{t_i^n}(\psi)$ to stress this fact. 
	We can choose a desirable value for $ \psi $ by optimizing some score function 
	which penalizes tuning parameters producing residuals which deviate most from the hypothesis of being uncorrelated.
	More formally let $ S : \mathbb R^{nd} \mapsto\mathbb R^+ $ be such a score function
	which takes in input the $ d$-dimensional residuals and returns a low score  
	if the residuals appear to be incorrelated and a high value otherwise (low score is better).
	We choose the optimal value of the tuning parameter vector $ \psi^* $ as
	\begin{equation}\label{eq:psi-star}
	 \psi^* = \arg \min_{\psi \in \Psi} S(r_{t_1^n}(\psi), \ldots, r_{t_n^n}(\psi) )
	\end{equation}
	The penalty function can be the test statistic in a white noise hypothesis testing. In the numerical simulations we consider the Ljung-Box test statistic defined as 
	\begin{equation}\label{eq:ljung}
	Q_\ell(r_n) = n(n+2)\sum_{j=1}^{\ell} \frac{\hat \rho ^2_j(r_n)}{n - j}
	\end{equation}
	where $ r_n = (r_{t^n_i})_{ i=0}^n $ is a vector of residuals, $\ell$ is the number of lags to be tested, $ \hat \rho_j $ denotes the sample auto-correlations of the residuals
	at lag $ j $ 
	\begin{equation}
	\hat \rho ^2_j(r_n) = 
	\frac{ 
		\frac{1}{n - j}\sum_{i=1}^{n-j} (r_{t^n_i} - \bar r_n)(r_{t^n_{i+j}} - \bar r_n)
	}{
		\frac{1}{n }\sum_{i=1}^{n} (r_{t^n_i} - \bar r_n)^2,
	}
	\end{equation}
	where $ \bar r_n = n^{-1} \sum_{i=1}^n r_{t^n_i} $ and $ n $ is the number of observations. Under the hypothesis that the residuals are 
	not correlated up to lag $ \ell $, $ Q_\ell $ is asymptotically distributed as a $ \chi^2_\ell $. 
	
	A similar approach can be adapted if one wants to tune the tuning parameter in order to optimize the fit of the residuals to the Gaussian distribution. This idea was introduced in \cite{bandi} in the context of bandwidth selection for non-parametric estimates of the drift and diffusion coefficients. One can consider a penalty
	  measuring the distance of the empirical distribution of the residuals
	from the Gaussian distribution function (such as the Kolmogorov -- Smirnov test statistic).
	More formally, equip the space of distribution functions with some norm $ \| \cdot \|$. The parameter $ \psi^* $ can then be chosen as 
	\begin{equation}\label{eq:psi-star-gaus}
			\psi^* = \arg \min_{\psi \in \Psi} \| \hat F_n(r_{t_1^n}(\psi), \ldots, r_{t_n^n}(\psi) ) - P_d \|,
	\end{equation}
	 where $ \hat F_n $ denotes the empirical distribution function of the residuals and $ P_d $ denotes 
	 the distribution function of the $ d- $ dimensional standard Gaussian distribution.
	 In the case where the sup norm is chosen one recovers the Kolmogorov-Smirnov test statistic. 
	 
	 Two ore more criteria can be combined in order in such a way to minimize simultaneously over 
	 multiple score functions. 
	 In the following we consider together the criteria based on \eqref{eq:ljung} 
	 and \eqref{eq:psi-star-gaus} in order to seek residuals 
	 which are uncorrelated and Gaussian, in the sense that they 
	 minimize both the two scores. The tuning parameters are then computed as
	 the solution of the following optimization problem
	 \begin{equation}\label{eq:psi-star-comb}
	 	\psi^* = \arg \min_{\psi \in \Psi} 
	 	[ Q_\ell(r_n(\psi)) + \|\hat{F}_n(r_n(\psi)) - P_d\| ].
	 \end{equation}

	 \subsection{Algorithmic implementation} 
	 The following algorithm implements criterion \eqref{eq:psi-star-comb}.
	 \begin{itemize}
	 	\item Step 0. Suppose a set of data points $ {\bf X}_n$ is given. Initialize
	 	the tuning parameter vector $ \psi $ with some value $ \psi_0 $. Fix a threshold
	 	$ \epsilon >0 $. 
	 	\item Until convergence is reached:
	 	\begin{itemize}
	 		\item Step 1. Compute the current Bridge estimates 
	 		with the current value $ \psi^{(k)} $ of the tuning parameters
	 		$ \hat \alpha ^{ (k)} = \hat \alpha (\psi^{(k)})$, 
	 		$ \hat \beta ^{ (k)} = \hat \beta (\psi^{(k)})$.
	 		\item Step 2. Compute the residuals $ (\hat r_{t_i^n} ^{(k)} )_{i=0}^n:= (\hat r_{t_i^n} (\psi^{(k)}))_{i=0}^n$ as in fomula \eqref{eq:resid-est}, with the current
	 		estimates of the parameters $ \hat \alpha ^{ (k)}$ and $ \hat \beta ^{ (k)}$.
	 		\item Step 3. Evaluate the score of the current residuals
	 		$ s^{(k)} = S(\hat r_{t_1^n} ^{(k)}, \ldots , \hat r_{t_n^n} ^{(k)}) $
	 		\item Step 4.
	 		If $ |s^{(k)} - s^{(k-1)}| < \epsilon $ stop: convergence is reached. Set $ \psi^* = \psi^{(k)} $ and return the optimal Bridge estimates of the parameters 
	 		$ \alpha^* = \alpha^{(k)} $ and $ \beta^* = \beta^{(k)} $.
	 		Otherwise move to some new point $ \psi^{(k+1)} $ (chosen according to some optimization algorithm) and repeat Steps 1 to 4.
	 	\end{itemize}
	 \end{itemize}

\section{Numerical Results}\label{numres}
	 
\subsection{Simulation study}

Consider a multivariate diffusion process $ X = (X_t)_{ t\geq 0} $ driven by the SDE

\begin{align}\label{eq:sim-sde}
\begin{cases}
\mathrm d X^{(1)}_t = (\alpha_{10} +  \alpha_{11}X^{(1)}_t  + \alpha_{12} X^{(2)}_t  + \alpha_{13} X^{(3)}_t ) \mathrm d t + 
(\beta_{10} +   \beta_{11} X^{(1)}_t + \beta_{12} X^{(2)}_t +  \beta_{13} X^{(3)}_t) \mathrm d W^{(1)}_t\\
\mathrm d X^{(2)}_t  = (\alpha_{20} +  \alpha_{21}X^{(1)}_t  + \alpha_{22} X^{(2)}_t  + \alpha_{23} X^{(3)}_t ) \mathrm d t + 
(\beta_{20} +   \beta_{21} X^{(1)}_t + \beta_{22} X^{(2)}_t +  \beta_{23} X^{(3)}_t) \mathrm d W^{(2)}_t \\
\mathrm d X^{(3)}_t = (\alpha_{30} +  \alpha_{31}X^{(1)}_t  + \alpha_{32} X^{(2)}_t  + \alpha_{33} X^{(3)}_t ) \mathrm d t + 
(\beta_{30} +   \beta_{31} X^{(1)}_t + \beta_{32} X^{(2)}_t +  \beta_{33} X^{(3)}_t) \mathrm d W^{(3)}_t
\end{cases}
\end{align}
which can be written in compact matrix notation as
\begin{equation}\label{eq:sim-sde-vec}
\mathrm d X_t  = (\alpha_{ 0} +
A X_t) \mathrm d t + 
\text{diag}(\beta_{ 0} +
B X_t) \,  \mathrm d W_t
\end{equation}
\noindent
where $ X_t = (X_t^{(1)}, X_t^{(2)}, X_t^{(3)} ) $, $ W_t = (W_t^{(1)}, W_t^{(2)}, W_t^{(3)} ),$ with $ W_t^{(i)},i=1,2,3, $ independent Brownian motions, 
$ \alpha_{ 0} = (\alpha_{10},\alpha_{20},\alpha_{30})' $, 
$  \beta_{ 0} = (\beta_{10},\beta_{20},\beta_{30})' $,
$ A = (\alpha_{ij})_{1\leq i,j\leq 3} $, $ B = (\beta_{ij})_{1\leq i,j\leq 3}$.
A sample path of the solution to \eqref{eq:sim-sde-vec} is represented in \autoref{fig:sample-path-sim}. 
\begin{figure}
	\begin{overpic}[width = .7\linewidth]{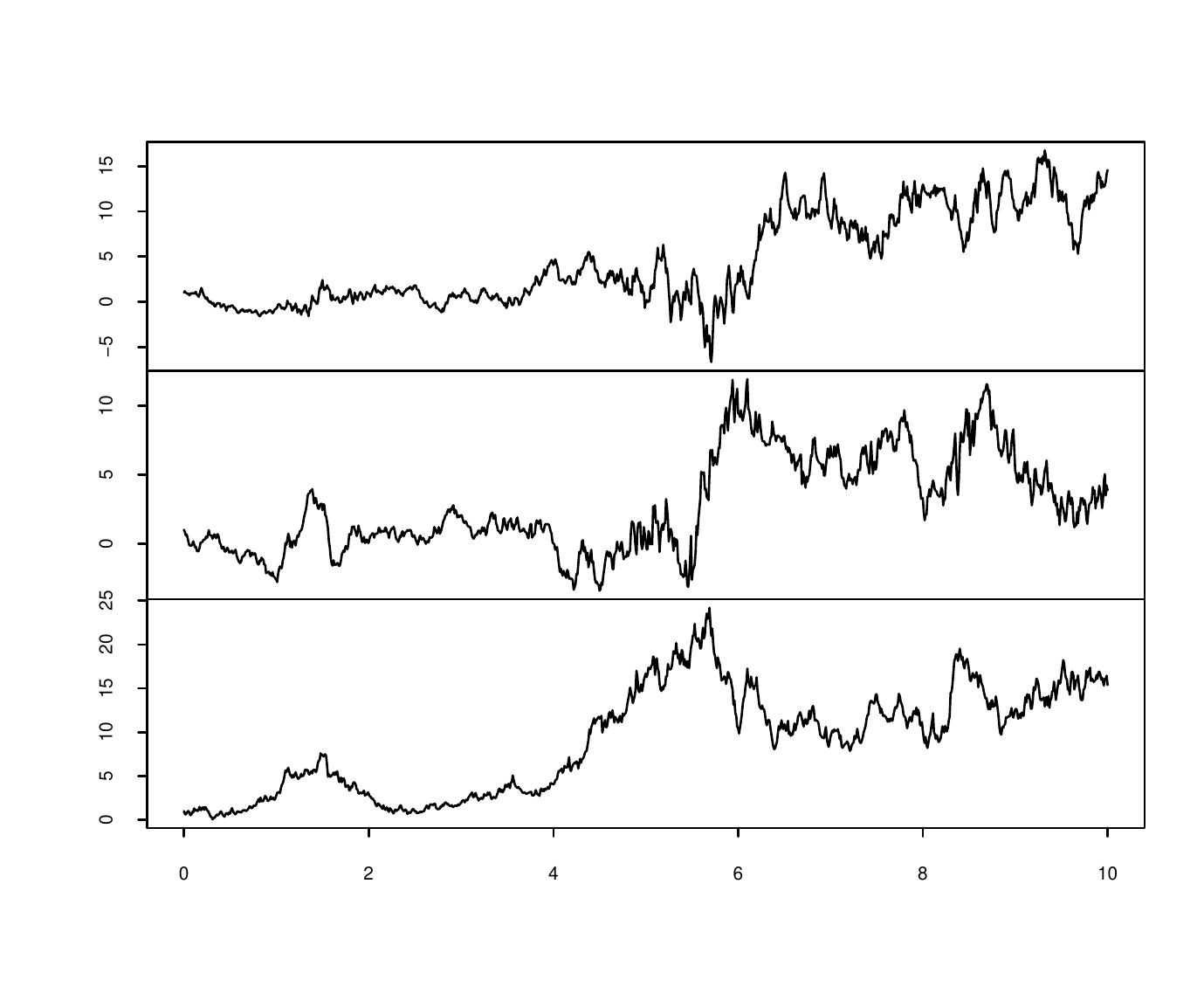}
		\put(55,5){	$t$}
		\put(0, 22){$X^{(3)}_t$}
		\put(0, 42){$X^{(2)}_t$}
		\put(0, 62){$X^{(1)}_t$}
	\end{overpic}
	\caption{A sample path of the solution to \eqref{eq:sim-sde-vec}}
	\label{fig:sample-path-sim}
\end{figure}

In our simulation we set several parameters to zero:
\[ \alpha_{21} = \alpha_{31} = \alpha_{32} = \alpha_{33} =   
\beta_{11} = \beta_{21} = \beta_{31} = \beta_{22} = \beta_{32} = 0. \] 
The true values for the non zero parameters are displayed in \autoref{tab:results}.
In particular this choice of the parameters implies that 
model \eqref{eq:sim-sde-vec} can be interpreted in terms 
of Granger causality. 
The idea is that the three components of $ X _t$ are correlated, but the value of $ X_t^{(2)}  $ effects $ X_t^{(1)} $, and $ X_t^{(3)} $ influences both $ X_t^{(1)} $ and $ X_t^{(2)} $ and not vice versa. Formally, in the 
continuous time setting we have the following definition of
non--causality (see \cite{mccr}). Suppose 
$ 	Z_t = (Y^{(1)}_t, Y^{(2)}_t, W_t)' $ is an 
$ n $--dimensional process, where $Y_t^{(1)},Y_t^{(2)} $ and $ W $ have dimension $  n_1, n_2$ and $ n_3 $, respectively, with $ n_1 + n_2 + n_3 = n $.
We say that $ Y_t^{(1)} $ does \emph{not} Granger--cause $ Y_t^{(2)} $ if
\begin{equation}\label{eq:granger-def}
\forall t, h \geq 0, \qquad 
\mathbb E(Y^{(2)}_{t+h}| \mathcal I_t) = \mathbb E (Y^{(2)}_{t+h}|\mathcal I_t - \mathcal Y^1_t),
\end{equation} 
where $ \mathcal I_t = \sigma( Z_s, s \leq t ) $ and
$ \mathcal I_t - \mathcal Y^1_t = \sigma( (Y^{(2)}_s, W_s), s \leq t )$.
Clearly, in the model \eqref{eq:sim-sde}, $(X_t ^{(1)}, X_t ^{(2)}) $ does not 
Granger-cause $ X_t ^{(3)} $, by setting 
$ Y_t ^{(1)} = (X_t ^{(1)}, X_t ^{(2)}) $ and $  Y_t ^{(2)} = X_t ^{(3)} $ in 
definition \eqref{eq:granger-def},
but the contrary is not true. Analogously $ X_t ^{(1)} $
does not Granger-cause $ X_t ^{(2)} $. 

\begin{remark}
 It is worth observing that the model \eqref{eq:sim-sde-vec} may not satisfy some assumption such a, for instance, the ergodicity (see Remark 1 in \cite{uchad}, for a sufficient condition). Nevertheless, it would be possible to modify slightly the SDE \eqref{eq:sim-sde-vec}, in order to guarantee that the assumptions fulfill. For instance, each diffusion term in the equations \eqref{eq:sim-sde} could be replaced with the following function
 $$\sigma_i(x,\beta)=\begin{cases}
 \beta_{i0}+\sum_{j=1}^3
\beta_{ij}x_j,& | \beta_{i0}+\sum_{j=1}^3
\beta_{ij}x_j |<M,\\
M,& \text{otherwise},
\end{cases}\quad i=1,2,3,$$
where $M>0$ is a positive constant sufficiently large. Therefore $$\Sigma(x,\beta)=\text{diag}(\sigma_1^2(x,\beta),\sigma_2^2(x,\beta),\sigma_3^2(x,\beta))$$ turns out bounded, which is a required condition  for the ergodicity.
\end{remark}

The aim of this simulation is the ability to recover the true 
model from the full model which contains a number
of unnecessary relations between the variables.
Moreover, we want to verify that the  multiple--penalties Bridge estimation technique,
together with the tuning parameter calibration procedure described above,
is able to identify the relevant relations among many. As a benchmark comparison we 
juxtapose the results of the Bridge estimation with those of the LASSO method and, 
with the un-penalized QMLE.  In order to compute the Bridge estimates, we will use as initial estimator $\tilde\theta_n^{QL}$ (with $p=2$) and $\hat G_n=\ddot{\ell}({\bf X}_n,\theta).$

We simulated $ N =   10^3 $ trajectories from 
model \eqref{eq:sim-sde-vec} over a long time interval and a fine grid, 
according to a  high frequency sampling scheme by setting
$ \Delta_n = n^{-1/3}$. We tested our model with increasing sample sizes equal to $ n = 500, \, n = 1000$ and $n = 10000$, in order to approach the asymptotic regime. The simulation was carried out 
in the context of the YUIMA framework (see the book \cite{iacus}), which provides the tools
for simulating sample paths of SDEs, performing quasi-maximum likelihood and adaptive LASSO estimation. 

Let $ \hat \alpha^{(k)}_{n,ij} $ and $ \hat \beta^{(k)}_{n,ij} $ denote the estimate obtained at replication $ k $
for the drift parameter $\alpha_{ij}  $ and 
for the diffusion parameter $ \beta_{ij} $, whose true values are denoted by $ \alpha_{0,ij}$ and $ \beta_{0,ij} $, respectively.
The performance of each estimation technique is evaluated 
by computing the empirical mean square errors
\begin{align}\label{eq:mse}
	\widehat{MSE}_{1ij} &= \frac 1 N \sum_{k=1}^N (\hat \alpha^{(k)}_{n,ij} - \alpha_{0,ij})^2  \qquad 1 \leq i \leq 3, \, 0 \leq j \leq 3\\
	\widehat{MSE}_{2ij} &= \frac 1 N \sum_{k=1}^N (\hat \beta^{(k)}_{n,ij} - \beta_{0,ij})^2   \qquad 1 \leq i \leq 3, \, 0 \leq j \leq 3 \notag 
\end{align}
and by means of the empirical selection frequencies, i.e. number of times that a null parameter was estimated as zero. 
The quantity \eqref{eq:mse} and the selection frequencies are computed for each of the estimation methods we want to compare. 
The numerical results are summarized in \autoref{tab:results} and \autoref{tab:sel-prop} respectively. 
 
With respect to  the experiment we conducted, we can draw the following conclusions, in particular we focus on model 
selection and thus identification of ``true" causal relations in the context of Granger causality. 

\begin{enumerate}
	\item
	\emph{The Bridge procedure can boost uniformity in model identification.}
	By looking at the selection frequencies (\autoref{tab:sel-prop}) we immediately note the different 
	behavior of the estimators for the drift and diffusion parameters: 
	each technique performs worse on the $\alpha_{ij}$'s. The 
	selection of the diffusion parameters is generally more accurate, almost reaching consistency
	for $n = 10000$. 
	This agrees with the results of \autoref{th:sde}: from a theoretical point 
	of view we expect a much slower convergence rate for the drift parameters. 
	We see that  
	the selection probability of the un-penalized estimator is generally lower
	then that of penalized techniques, as expected.
	The LASSO and the Bridge both show high selection probabilities for the diffusion parameters,
	with the LASSO slightly better. But the main difference lies in the behavior of the drift parameters estimators: the Bridge has a selection probability several times higher than the LASSO, especially 
	for moderate sample sizes.  
	That is the Bridge estimator has a generally higher accuracy in the selection of the 
	\emph{whole} model, losing a bit of accuracy on the diffusion parameters group 
	but gaining a much greater improvement for the drift parameters group.
	To illustrate this point we chose one representative of the drift parameters 
	and one of the diffusion parameters. In  \autoref{fig:emp-dist} we show the empirical distributions of the three types of estimators
	of the chosen representatives (i.e. the estimators of $\alpha_{21}$ and $\beta_{21}$), for each sample size. 
	The results of the empirical mean squared errors 
	and selection probabilities allow us to conclude that 
	the adaptive Bridge penalization 
	has a good performance uniformly with respect to the several parameter groups.
	
	\item \emph{The Bridge estimator is less sensible to a poor initial guess.}
	In the cases where the LASSO estimator gives its better results the  quasi-maximum likelihood
	estimator is  more concentrated around zero too. This means that the initial estimates 
	for the adaptive procedure
	were usually quite good and produced higher weights for the null components of 
	the vector parameters. The Bridge estimator on the other hand is able to identify 
	the zero parameters with a higher frequency also when the distribution 
	of the QMLE is more spread apart. This suggests that the performance of 
	the Bridge estimator is, at least in this example, less subordinate to a good 
	performance of the initial estimate with respect to the LASSO procedure. 
	
	\item 
	\emph{The automatic tuning parameter selection criterion is conservative. }
	The initial values for the tuning parameters were set to $ \lambda_{n,0} = \gamma_{n,0} = 2, \delta_1 = \delta_2 = 1, q_1 = q_2 = .9 $. It is worth noticing that 
	the automatic tuning parameter choice procedure does only a fine 
	adjustments around the initial value supplied. This means that 
	the user can choose the magnitude of penalization he desires
	and the algorithm will tweak it to better fit the data. 
	The results of the  tuning parameter selection procedure obtained in our simulation study, for $n = 1000$, are summarized 
	in \autoref{tab:tuning-par}. 
\end{enumerate}

{ \scriptsize
\begin{table}
		 \caption{Summary of the results of the simulation study. }\label{tab:results}
	%
	
	\begin{tabular}{c ccccccc } 
		\hline
		\multirow{2}{*}{Par.}
		&
		\multicolumn{2}{l}{Bridge } &
		\multicolumn{2}{l}{LASSO} &
		\multicolumn{2}{l}{QMLE} &
		\multirow{2}{*}{True value} \\
		& Avg.  (St. Err) & $ \widehat{MSE}$ &
		Avg.  (St. Err) & $ \widehat{MSE}$ &
		Avg.  (St. Err) & $ \widehat{MSE}$ & \\
		\hline
		\hline 
		$\alpha_{10}$  & -1.4576 (1.2738) & 1.6234 & -1.6417 (1.386) & 1.9397 & -1.6737 (1.3784) & 1.9289 & -1.5\\ 
		$\alpha_{11}$  & -0.9376 (0.8198) & 0.9879 & -2.0592 (0.9213) & 1.161 & -2.0692 (0.9044) & 1.1414 & -1.5\\ 
		$\alpha_{12}$  & 0.2629 (0.5437) & 0.5327 & 1.0277 (0.8829) & 0.8561 & 1.0459 (0.8748) & 0.8523 & 0.75\\ 
		$\alpha_{13}$  & 0.776 (0.6678) & 0.4463 & 1.0245 (0.7355) & 0.616 & 1.0431 (0.7283) & 0.616 & 0.75\\

		$\alpha_{20}$  & -0.3547 (0.6353) & 1.7152 & -1.7171 (1.3249) & 1.8013 & -1.7309 (1.3206) & 1.796 & -1.5\\ 
		$\alpha_{21}$  & 0.1177 (0.3624) & 0.1451 & 0.0286 (0.6292) & 0.3964 & 0.0229 (0.6321) & 0.3998 & 0\\ 
		$\alpha_{22}$  & -1.8366 (0.7923) & 0.7406 & -2.1356 (0.8421) & 1.1127 & -2.1412 (0.8399) & 1.1161 & -1.5\\ 
		$\alpha_{23}$  & 0.6343 (0.4934) & 0.2566 & 0.9938 (0.6388) & 0.4672 & 1.01 (0.64) & 0.4769 & 0.75\\

		$\alpha_{30}$  & 1.1661 (0.6206) & 0.4964 & 2.5298 (1.1192) & 2.3122 & 2.5465 (1.1169) & 2.3417 & 1.5\\ 
		$\alpha_{31}$  & 0.0007 (0.4084) & 0.1667 & -0.0315 (0.6012) & 0.3621 & -0.0333 (0.6062) & 0.3684 & 0\\ 
		$\alpha_{32}$  & -0.0064 (0.4386) & 0.1923 & -0.0286 (0.6702) & 0.4497 & -0.0269 (0.674) & 0.4546 & 0\\ 
		$\alpha_{33}$  & -0.0689 (0.3356) & 0.1173 & -0.2664 (0.5313) & 0.3531 & -0.2732 (0.5281) & 0.3534 & 0\\

		$\beta_{10}$  & 1.3649 (0.6112) & 0.3915 & 1.4397 (0.5957) & 0.3583 & 1.4935 (0.5859) & 0.3431 & 1.5\\ 
		$\beta_{11}$  & 0.0521 (0.3491) & 0.1245 & 0.0398 (0.339) & 0.1164 & 0.0561 (0.3749) & 0.1436 & 0\\ 
		$\beta_{12}$  & 0.6217 (0.6415) & 0.4604 & 0.6536 (0.7094) & 0.5672 & 0.7554 (0.6808) & 0.5894 & 0.4\\ 
		$\beta_{13}$  & 0.6152 (0.5373) & 0.3348 & 0.6254 (0.5772) & 0.3837 & 0.6761 (0.5781) & 0.4102 & 0.4\\

		$\beta_{20}$  & 1.3751 (0.4769) & 0.2429 & 1.457 (0.4639) & 0.2169 & 1.5149 (0.4549) & 0.2071 & 1.5\\ 
		$\beta_{21}$  & -0.003 (0.2767) & 0.0765 & -0.0136 (0.272) & 0.0741 & -0.0062 (0.3039) & 0.0923 & 0\\ 
		$\beta_{22}$  & 0.0118 (0.2698) & 0.0729 & 0.0039 (0.2566) & 0.0658 & 0.0143 (0.2927) & 0.0858 & 0\\ 
		$\beta_{23}$  & 0.5612 (0.4646) & 0.2417 & 0.5251 (0.5008) & 0.2663 & 0.5854 (0.5123) & 0.2966 & 0.4\\

		$\beta_{30}$  & 1.4165 (0.5219) & 0.2791 & 1.4205 (0.5028) & 0.259 & 1.4754 (0.4937) & 0.2442 & 1.5\\ 
		$\beta_{31}$  & 0.0554 (0.3063) & 0.0968 & 0.0467 (0.2939) & 0.0885 & 0.058 (0.3226) & 0.1074 & 0\\ 
		$\beta_{32}$  & 0.0155 (0.3137) & 0.0986 & 0.0146 (0.2896) & 0.084 & 0.018 (0.3241) & 0.1053 & 0\\ 
		$\beta_{33}$  & 0.5599 (0.4971) & 0.2725 & 0.5322 (0.5108) & 0.2783 & 0.5907 (0.5191) & 0.3056 & 0.4\\ \hline
		
		\hline
	\end{tabular}
	\subcaption{$n=500$}
	
	\begin{tabular}{c ccccccc } 
		\hline
		\multirow{2}{*}{Par.}
		&
		\multicolumn{2}{l}{Bridge } &
		\multicolumn{2}{l}{LASSO} &
		\multicolumn{2}{l}{QMLE} &
		\multirow{2}{*}{True value} \\
		& Avg.  (St. Err) & $ \widehat{MSE}$ &
		Avg.  (St. Err) & $ \widehat{MSE}$ &
		Avg.  (St. Err) & $ \widehat{MSE}$ & \\
		\hline
		\hline 
		$\alpha_{10}$  & -1.5139 (1.2947) & 1.676 & -1.6591 (1.3822) & 1.9353 & -1.6783 (1.372) & 1.9137 & -1.5\\		 
		$\alpha_{11}$  & -0.9119 (0.7743) & 0.9453 & -2.0197 (0.9178) & 1.1122 & -2.0419 (0.8857) & 1.0778 & -1.5\\ 
		$\alpha_{12}$  & 0.2774 (0.5604) & 0.5373 & 1.0386 (0.8749) & 0.8485 & 1.0519 (0.8651) & 0.8393 & 0.75\\ 
		$\alpha_{13}$  & 0.7671 (0.6585) & 0.4338 & 0.9992 (0.7405) & 0.6104 & 1.0133 (0.7236) & 0.5927 & 0.75\\

		$\alpha_{20}$  & -0.3403 (0.6324) & 1.7447 & -1.6824 (1.3291) & 1.7991 & -1.6921 (1.3239) & 1.789 & -1.5\\
		$\alpha_{21}$  & 0.1122 (0.3708) & 0.15 & 0.0288 (0.6019) & 0.3631 & 0.0296 (0.5981) & 0.3585 & 0\\ 
		$\alpha_{22}$  & -1.8588 (0.7865) & 0.7471 & -2.0806 (0.8562) & 1.07 & -2.0909 (0.8389) & 1.0526 & -1.5\\ 
		$\alpha_{23}$  & 0.6528 (0.4718) & 0.232 & 0.9831 (0.6509) & 0.4779 & 0.9988 (0.6416) & 0.4734 & 0.75\\ 
		
		$\alpha_{30}$  & 1.1897 (0.585) & 0.4384 & 2.5817 (1.0944) & 2.3672 & 2.6016 (1.0786) & 2.3765 & 1.5\\ 
		$\alpha_{31}$  & 0.007 (0.3917) & 0.1534 & -0.0177 (0.5583) & 0.312 & -0.0189 (0.5562) & 0.3096 & 0\\ 
		$\alpha_{32}$  & 0.0022 (0.4454) & 0.1984 & -0.0169 (0.6417) & 0.412 & -0.0135 (0.642) & 0.4122 & 0\\ 
		$\alpha_{33}$  & -0.0749 (0.3483) & 0.1269 & -0.2345 (0.5209) & 0.3262 & -0.2435 (0.512) & 0.3213 & 0\\ 
		
		$\beta_{10}$  & 1.3518 (0.674) & 0.4761 & 1.4854 (0.6191) & 0.3834 & 1.525 (0.6155) & 0.3793 & 1.5\\ 	
		$\beta_{11}$  & 0.0389 (0.3635) & 0.1336 & 0.0282 (0.3483) & 0.122 & 0.0395 (0.3771) & 0.1437 & 0\\ 		
		$\beta_{12}$  & 0.5937 (0.6152) & 0.4159 & 0.7112 (0.6889) & 0.5712 & 0.7741 (0.6874) & 0.6123 & 0.4\\		
		$\beta_{13}$  & 0.5835 (0.5317) & 0.3163 & 0.6615 (0.5822) & 0.4072 & 0.702 (0.593) & 0.4427 & 0.4\\

		$\beta_{20}$  & 1.3247 (0.5453) & 0.328 & 1.4639 (0.5058) & 0.2571 & 1.5015 (0.5093) & 0.2593 & 1.5\\ 
		$\beta_{21}$  & 0.0117 (0.311) & 0.0968 & -0.0022 (0.2984) & 0.089 & 0.0021 (0.3255) & 0.1059 & 0\\ 
		$\beta_{22}$  & 0.0137 (0.3313) & 0.1099 & 0.0119 (0.2975) & 0.0886 & 0.0253 (0.3294) & 0.1091 & 0\\ 
		$\beta_{23}$  & 0.5722 (0.5054) & 0.285 & 0.5925 (0.5377) & 0.3261 & 0.635 (0.5541) & 0.3622 & 0.4\\

		$\beta_{30}$  & 1.3948 (0.5406) & 0.3032 & 1.4599 (0.4897) & 0.2414 & 1.5036 (0.4846) & 0.2348 & 1.5\\
		$\beta_{31}$  & 0.0178 (0.2985) & 0.0894 & 0.0153 (0.2981) & 0.0891 & 0.0174 (0.3235) & 0.105 & 0\\ 
		$\beta_{32}$  & 0.004 (0.3178) & 0.101 & 0.0035 (0.2968) & 0.0881 & 0.0053 (0.3252) & 0.1057 & 0\\ 
		$\beta_{33}$  & 0.5406 (0.4914) & 0.2611 & 0.5723 (0.5241) & 0.3043 & 0.6183 (0.5464) & 0.3461 & 0.4\\

		\hline
	\end{tabular}
	\subcaption{$n=1000$}
	\bigskip


%
	%
	\begin{tabular}{c ccccccc } 

		\hline
		\multirow{2}{*}{Par.}
		&
		\multicolumn{2}{l}{Bridge } &
		\multicolumn{2}{l}{LASSO} &
		\multicolumn{2}{l}{QMLE} &
		\multirow{2}{*}{True value} \\
		& Avg.  (St. Err) & $ \widehat{MSE}$ &
		Avg.  (St. Err) & $ \widehat{MSE}$ &
		Avg.  (St. Err) & $ \widehat{MSE}$ & \\
		\hline
		\hline 
		$\alpha_{10}$  & -1.5743\newline (0.3311) & 0.1096 & -1.4107\newline (0.2665) & 0.0723 & -1.4919\newline (0.2817) & 0.0799 & -1.5\\ 
		$\alpha_{11}$  & -0.6301\newline (0.6105) & 1.1287 & -1.7535\newline (0.4647) & 0.2797 & -1.7576\newline (0.4825) & 0.2989 & -1.5\\ 
		$\alpha_{12}$  & 0.0909\newline (0.2492) & 0.4964 & 0.605\newline (0.3613) & 0.1513 & 0.6291\newline (0.3919) & 0.168 & 0.75\\ 
		$\alpha_{13}$  & 1.0865\newline (0.5839) & 0.4535 & 1.2614\newline (0.4958) & 0.5069 & 1.2625\newline (0.5173) & 0.5299 & 0.75\\ 
		$\alpha_{20}$  & -0.5559\newline (0.2808) & 0.1064 & -2.1026\newline (0.3989) & 0.1631 & -2.1682\newline (0.4172) & 0.1795 & -1.5\\ 
		$\alpha_{21}$  & 0.1623\newline (0.3033) & 0.1181 & 0.0336\newline (0.4542) & 0.207 & 0.0476\newline (0.459) & 0.2127 & 0\\ 
		$\alpha_{22}$  & -1.6534\newline (0.5847) & 0.3648 & -1.7702\newline (0.4627) & 0.2867 & -1.7595\newline (0.4739) & 0.2917 & -1.5\\ 
		$\alpha_{23}$  & 0.8658\newline (0.4269) & 0.1953 & 1.7437\newline (0.5375) & 1.2757 & 1.744\newline (0.5296) & 1.2682 & 0.75\\ 
		$\alpha_{30}$  & 1.5828\newline (0.3598) & 0.1294 & 1.7036\newline (0.2788) & 0.0776 & 1.7197\newline (0.2945) & 0.0867 & 1.5\\ 
		$\alpha_{31}$  & 0.0221\newline (0.2168) & 0.0474 & 0.0157\newline (0.2268) & 0.0516 & 0.0067\newline (0.2444) & 0.0597 & 0\\ 
		$\alpha_{32}$  & 0.0033\newline (0.2153) & 0.0463 & 0.004\newline (0.213) & 0.0453 & 0.0085\newline (0.2293) & 0.0526 & 0\\ 
		$\alpha_{33}$  & -0.1163\newline (0.2848) & 0.0945 & -0.1964\newline (0.3236) & 0.1431 & -0.1817\newline (0.3486) & 0.1544 & 0\\ 
		$\beta_{10}$  & 1.441\newline (0.268) & 0.0728 & 1.4335\newline (0.2691) & 0.0735 & 1.4754\newline (0.2701) & 0.0737 & 1.5\\ 
		$\beta_{11}$  & 0.0116\newline (0.0966) & 0.0094 & 0.0071\newline (0.0944) & 0.009 & 0.007\newline (0.1052) & 0.0111 & 0\\ 
		$\beta_{12}$  & 0.253\newline (0.1999) & 0.0615 & 0.3061\newline (0.171) & 0.038 & 0.3123\newline (0.1709) & 0.0369 & 0.4\\ 
		$\beta_{13}$  & 0.6054\newline (0.1768) & 0.0734 & 0.6014\newline (0.1801) & 0.0729 & 0.6131\newline (0.2053) & 0.0875 & 0.4\\ 
		$\beta_{20}$  & 1.8843\newline (0.2431) & 0.06 & 1.8948\newline (0.2583) & 0.0678 & 1.9126\newline (0.2682) & 0.0732 & 1.5\\ 
		$\beta_{21}$  & -0.007\newline (0.0702) & 0.005 & -0.0055\newline (0.073) & 0.0053 & -0.0057\newline (0.0768) & 0.0059 & 0\\ 
		$\beta_{22}$  & -0.0011\newline (0.0509) & 0.0026 & -0.0023\newline (0.0694) & 0.0048 & -0.0031\newline (0.069) & 0.0048 & 0\\ 
		$\beta_{23}$  & 0.8396\newline (0.1905) & 0.2295 & 0.8414\newline (0.1907) & 0.2312 & 0.8432\newline (0.1987) & 0.2358 & 0.4\\ 
		$\beta_{30}$  & 0.9928\newline (0.2341) & 0.0649 & 0.9846\newline (0.2369) & 0.0665 & 1.0082\newline (0.256) & 0.0752 & 1.5\\ 
		$\beta_{31}$  & -0.0054\newline (0.0512) & 0.0026 & -0.0065\newline (0.0651) & 0.0043 & -0.0062\newline (0.0647) & 0.0042 & 0\\ 
		$\beta_{32}$  & -0.0014\newline (0.062) & 0.0038 & 0.0002\newline (0.0574) & 0.0033 & -0.0023\newline (0.0721) & 0.0052 & 0\\ 
		$\beta_{33}$  & 0.4085\newline (0.1261) & 0.0159 & 0.4105\newline (0.1361) & 0.0186 & 0.4168\newline (0.1677) & 0.0284 & 0.4\\ 
		\hline
	\end{tabular}
	\subcaption{ $ n=10000$}

\end{table}
}

\bigskip \bigskip 
\begin{table}[h]
	\setlength{\extrarowheight}{.25em}

	\begin{tabular}{c|ccc|ccc|ccc}
			\hline 
		  \multicolumn{1}{c}{}
		& \multicolumn{3}{c}{$n=500$}
		& \multicolumn{3}{c}{$n=1000$}
		& \multicolumn{3}{c}{$n=10000$}\\
		\hline 
		Par. & Bridge & LASSO & QMLE &
		Bridge & LASSO & QMLE &
		Bridge & LASSO & QMLE \\
		\hline

$\alpha_{21}$  &   0.382  &   0.058   &  0.023 &	0.398  &  0.080 &  0.041 &	0.566 &    0.195  &   0.159 \\
$\alpha_{31}$  &   0.401  &   0.065   &  0.031 &	0.391  &  0.093 &  0.044 &	0.613 &    0.368  &   0.326 \\
$\alpha_{32}$  &   0.357  &   0.056   &  0.025 &	0.368  &  0.086 &  0.038 &	0.647 &    0.410  &   0.355 \\
$\alpha_{33}$  &   0.404  &   0.053   &  0.030 &	0.402  &  0.076 &  0.045 &	0.487 &    0.229  &   0.205 \\
$\beta_{11}$   &   0.501  &   0.676   &  0.169 &	0.605  &  0.691 &  0.324 &	0.948 &    0.967  &   0.962 \\
$\beta_{21}$   &   0.646  &   0.803   &  0.232 &	0.720  &  0.793 &  0.442 &	0.963 &    0.981  &   0.973 \\
$\beta_{22}$   &   0.588  &   0.772   &  0.172 &	0.685  &  0.778 &  0.359 &	0.981 &    0.985  &   0.976 \\
$\beta_{31}$   &   0.629  &   0.780   &  0.217 &	0.760  &  0.805 &  0.470 &	0.985 &    0.987  &   0.983 \\
$\beta_{32}$   &   0.610  &   0.771   &  0.178 &	0.713  &  0.791 &  0.392 &	0.983 &    0.987  &   0.983 \\

	\hline  
	\end{tabular}
\caption{Empirical selection frequencies for the zero parameters. }\label{tab:sel-prop}
\end{table}

\begin{figure}
	\centering
	\includegraphics[height=\textheight]{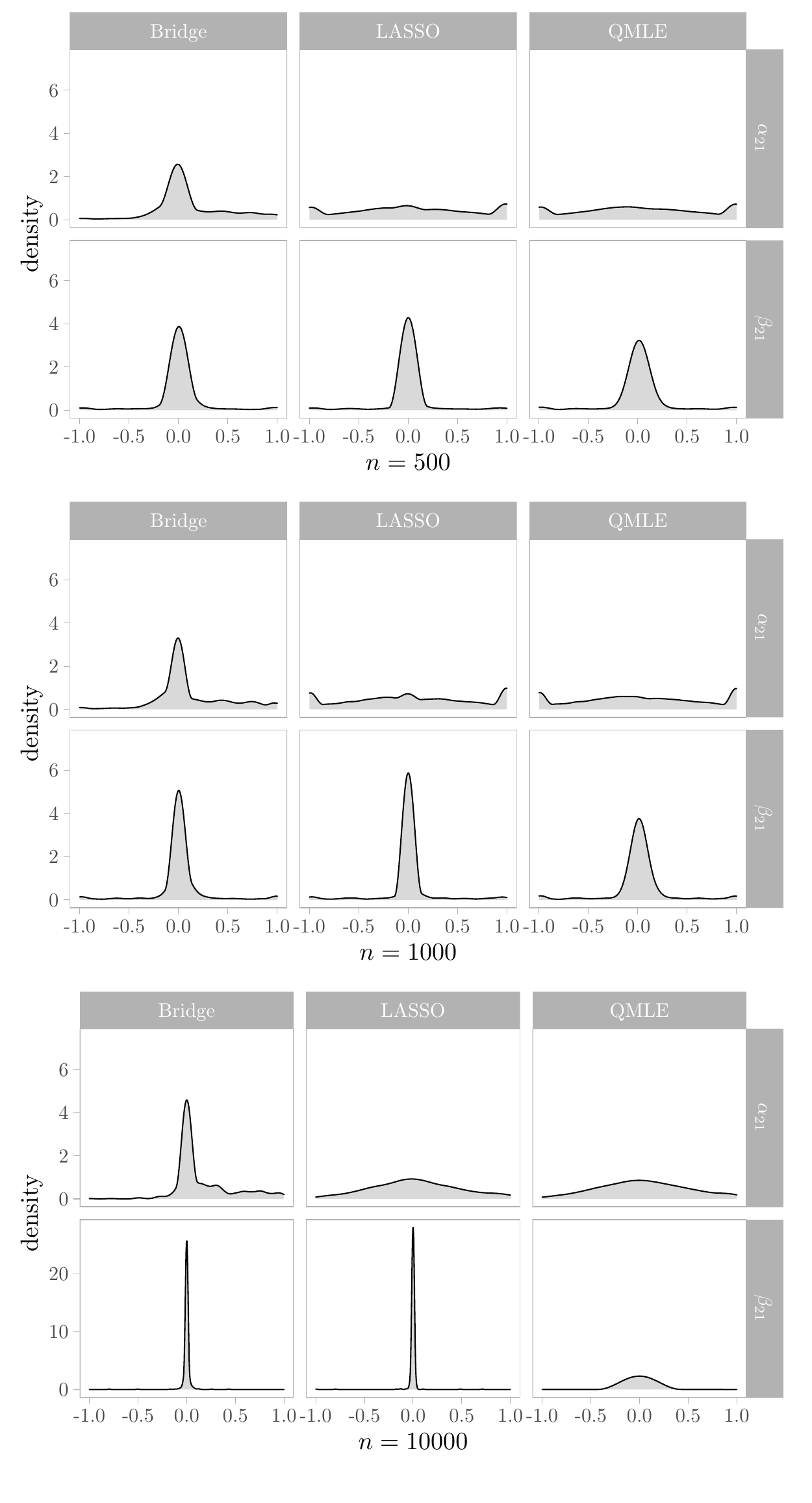}
	\caption{Comparison of the distributions of the estimators of two of the null parameters, $ \alpha_{21} $ and $ \beta_{21} $: Bridge (column 1), LASSO (column 2) and QMLE (column 3). Note that the plots on the last line are on a different scale.}
	\label{fig:emp-dist}
\end{figure}

\begin{table}
	\begin{tabular}{cccccccc}
		\hline
		& Min. & 1st Qu.  &Median  &  Mean & 3rd Qu.  &  Max. &  St. Dev. \\
		\hline \hline
		$ q_1 $ &
		0.001 &  0.89 &  0.94 &  0.94 &  0.98 &  1 & 0.05 \\
		$ q_2 $ &
		0.76 &  0.9 &  0.93 &  0.94 &  0.98 &  1 & 0.05 \\
		$ \lambda_{0} $ &
		0.1 & 1.69 &   1.99 &   2.06 &   2.27 &   7.37 & 0.93 \\
		$ \gamma_0 $ &
		0.1  & 1.56  & 1.98 &   1.96 &   2.04 &  10 & 0.87 \\ 
		$ \delta_1 $ &	0.1  &0.92 & 1 & 1.07 & 1.27 & 2  & 0.32 \\
		$ \delta_2 $ & 0.1 & 0.71 &  0.99&  0.92 & 1 & 2 & 0.34
		\\\hline
	\end{tabular}
	\caption{Summary of the tuning parameters obtained by implementing the adjustment procedure \eqref{eq:psi-star-comb} for $ n = 1000 $.}
	\label{tab:tuning-par}
\end{table}

\subsection{Real data prediction.}

One of the main purposes of the regularized procedures is to improve the predictive capability of the model. In fact, usually when we 
apply of statistical learning techniques we are not interested on the estimates of the parameters as much 
as we are in the ability of the model to provide accurate predictions for future outcomes. 
Bearing this in mind we tested our model on real data in a predictive study. We fitted a model of the form \eqref{eq:sim-sde-vec} with four components 
on a financial time series of daily closing stock prices of four major tech companies, Google, Amazon, Apple and Microsoft, which will be denoted 
by $ X_1, X_2, X_3, X_4$, respectively. 
The time series consists of $ n = 3283 $ observations starting Jan. 3rd, 2007. 
The goal is to predict the evolution of the price over a year long period. The training data 
consists of 3031 observations, until 16-01-2019. The test data is made of 
the last year of observations, from 17-01-2019 to 16-01-2020. The data have been downloaded 
by using the service Yahoo Finance and imported into R by using the library \texttt{quantmod}.
 
The scheme of the experiment is as follows. We first fitted the model \eqref{eq:sim-sde-vec}
on our training set by using the  adaptive Bridge and LASSO methods by using the QMLE as initial estimator. 
Then we performed parametric bootstrap to obtain simulations  of the series for the time period corresponding to the test data. 
In order to assess the performance of the estimators we computed predictive mean square errors and predictive confidence bands.
Let $ n_{te} $ be the number of observation in the test set $ (x_{t_i})_{i=1}^{n_{te}} $ and $ N $ the number of simulations performed. 
The corresponding predicted value are denoted by $ (\hat x^{(k)}_{t_i})_{ i=1}^{ n_{te}} , k=1, \ldots, N$. 
The predictive mean square error is computed as
\begin{equation}\label{key}
\widehat{MSE}_{p} = \frac{1}{n_{te} \cdot N} \sum_{i=1}^{n_{te}} \sum_{k=1}^{N} (x_{t_i} - \hat x^{(k)}_{t_i})^2
\end{equation}
The error bands are computed as the quantiles of the predicted values at each time instant. In this case we show 80\% and 95\% 
confidence bands. 

The results are summarized in \autoref{tab:pred-mses}. It compares 
the results obtained with the Bridge and LASSO technique  over $ N = 10^4 $ simulations for each of the stocks considered. The table also
reports the result obtained with the unpenalized QMLE as a benchmark. The tuning parameters have been set to $ \lambda_{0} = \gamma_0 = 10, \delta_1 = \delta_2 = 2.5 $ for both the LASSO and the Bridge estimator and  $ q_i,i=1,2,, $ was chosen to be $ 0.9 $.
We did not use any tuning parameter selection technique in this predictive
study. We adopted the rule to set to zero all the parameters 
for which the 
absolute value of  the 
estimate was below a certain threshold $ \epsilon $, thus obtaining a reduced model. We then
ran the bootstrap simulations with the reduced model for each technique.
Initially the full model had 40 parameters, 11 of which 
were estimated as zero by the Bridge estimator and 9 by the LASSO, 
having set $ \epsilon = 10^{-3} $.
\begin{table}
		\setlength{\extrarowheight}{.25em}
	\begin{tabular}{c c c c}
		\hline 
		& 
		\multicolumn{3}{c}{$ \widehat{MSE}_{p} $} \\
		 
		\textit{Series} & Bridge & LASSO & QMLE \\
		\hline \hline 
		$ X_1 $ & 4.64 & 9.47 &  10.78 \\
		$ X_2 $ & 2.75 & 5.96 &  6.09 \\
		$ X_3 $ & 4.68 & 9.34 &  10.52 \\
		$ X_4 $ & 8.12 & 11.4 &	 11.72 \\
		\hline 
	\end{tabular}
\caption{Comparison of predictive mean square errors over $ N = 10^4 $ simulations. }
\label{tab:pred-mses}
\end{table}

\begin{figure}
	\centering
	\includegraphics[width = \textwidth]{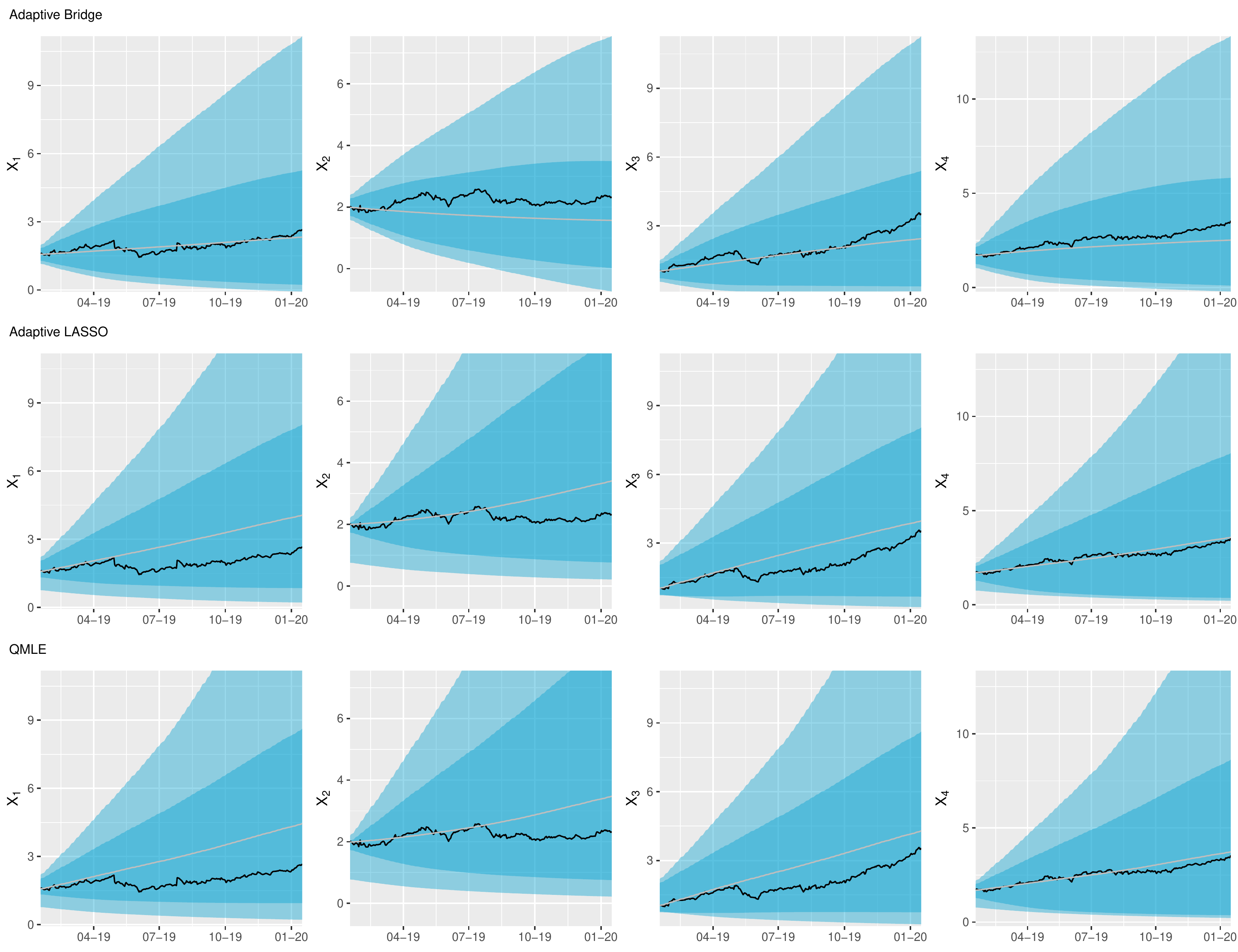}
	\caption{Predictive MSE obtained with the Bridge (row 1), LASSO (row 2) and QML estimators (row 3) for each of the four data series. The darker band shows depicts the 80\% quantiles, the lighter band the 95\% quantiles.}
	\label{fig:mse-pred}
\end{figure}

\begin{enumerate}
	\item \emph{Bridge estimator can achieve better predictive capability}.
	Results of \autoref{tab:pred-mses} show that the predictive MSE of the Bridge estimator is smaller on all the 
	three data series. The bridge estimator was able to produce improvements  on the predictive error of 
	57\%, 54\%, 55\% and 31\% for the three series with respect to the unpenalized estimator.  
	The LASSO improved the predictive capability  of the model as well, but in this case 
	the reduction was modest, 
	12\%, 2\% and 11\%, 2\% respectively, for the three series. 
	We arrive to the same conclusion by comparing the confidence bands depicted in \autoref{fig:mse-pred}. 
	The bands obtained with the Bridge estimator, on the first line, are narrower, leading to 
	less uncertainty in the prediction. 
	
	\item \emph{Bridge estimator resulted reliable over longer time periods}.
	By looking at \autoref{fig:mse-time} we see how the predictive mean square error changes over time. At first the two errors 
	are similar, but at later times the LASSO error grows at a higher rate then the error of the Bridge estimator, 
	reaching values some times even double in the last part of the trajectory. This means that in the case under scrutiny
	the Bridge estimator allows to obtain predictions for longer time periods, thanks to the slower growth of 
	its error rate. 
\end{enumerate}

\begin{figure}
	\centering
	\includegraphics[width = \textwidth]{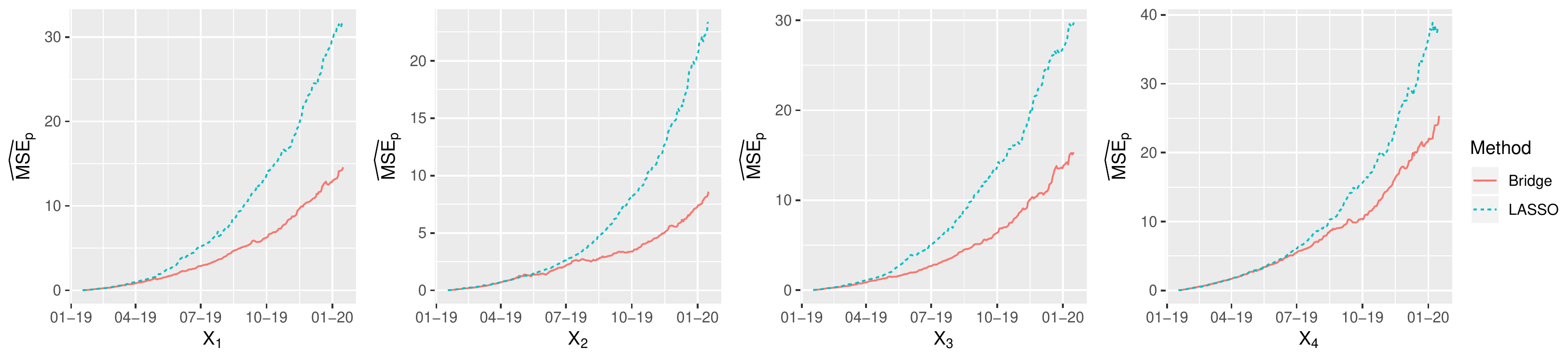}
	\caption{Comparison of the predictive MSE over time. }
	\label{fig:mse-time}
\end{figure}

\subsection{Comparison with disjoint estimation}
In \cite{suz}, the authors introduced the penalized estimator for ergodic diffusions \eqref{sde} defined as follows by
\begin{equation}\label{eq:est-non-joint}
	\hat{\alpha}^{(q_1)}_{n} \in \arg\min_{\theta \in \bar{\Theta}_1} Q^{(q_1)}_{1,n}(\alpha),
	\qquad 
	\hat{\beta}^{(q_2)}_{n} \in \arg\min_{\theta \in \bar{\Theta}_2} Q^{(q_2)}_{2,n}(\beta),
\end{equation}
where 
\begin{equation*}
Q^{(q_1)}_{1,n}(\alpha) =  (\alpha - \tilde \alpha_{n})'
\hat{G}_{1,n}   (\alpha - \tilde \alpha_{n})  + \sum_{i=1}^{p_1} \kappa_{i,n}^{1} | \alpha_i|^{q_1},
\end{equation*}
\begin{equation*}
Q^{(q_2)}_{2,n}(\beta) =  (\beta - \tilde \beta_{n})'
\hat{G}_{2,n}   (\beta - \tilde \beta_{n})  + \sum_{i=1}^{p_2} \kappa_{i,n}^{2} | \beta_i|^{q_2},
\end{equation*}
and $\hat{G}_{j,n}$ are $p_j \times p_j$ random matrices satisfying suitable regularity conditions, $(\tilde\alpha_n,\tilde\beta_n)'$ is an initial unpenalized estimator 
and $\kappa_{i,n}^{j}, j=1,2 $ represents suitable  adaptive weights (see \cite{suz} for details).
The main difference between this estimator and \eqref{eq:estsde} is that the former 
estimates each parameter group separately: in the following we refer to \eqref{eq:est-non-joint} as \emph{disjoint} estimator, while we call {\it joint} estimator \eqref{eq:estsde}.

In this section we compare the performances of the joint and disjoint estimators on a simple model. 
Consider the following SDE

\begin{equation}\label{eq:sdecomp}
\begin{pmatrix}
\mathrm d X_{1,t} \\
\mathrm d X_{2,t} 
\end{pmatrix}
= 
\begin{pmatrix}
-\alpha_{11} X_{1,t}^3 + \alpha_{12}(\sin X_{2,t} +2  )
\\ + \alpha_{21}(\cos X_{1,t} +2  )-\alpha_{22} X_{2,t}  
\end{pmatrix}
\mathrm d t + 
\begin{pmatrix}
\beta_{11} & 1 \\
1 & \beta_{22}
\end{pmatrix}
\begin{pmatrix}
\mathrm d W_{1,t} \\
\mathrm d W_{2,t} 
\end{pmatrix},
\end{equation}
$ 0 \leq t \leq T = 10 $, $(X_{1,0}, X_{2,0})' = (1,1)' $, where $ (\alpha_{11},  \alpha_{12},  \alpha_{21},  \alpha_{22})'  \in [0, 10]^4$
and $ (\beta_{11}, \beta_{22})'  \in [0, 10]^2$. This model 
is closely related to the one considered in \cite{uchad}. 
We simulated $N=10^3$ sample paths from this model, each with $n=10^3$ equally spaced data points, with true parameter value
 $(\alpha_{11}^*,  \alpha_{12}^*,  \alpha_{21}^*,  \alpha_{22}^*)' = (1, 0, 0, 1)', \,
 (\beta_{11}^*, \beta_{22}^*) '= (0,0)'$.
 
 We compared the performances of the joint and disjoint estimation techniques 
 for the Bridge estimator with $q_1 = q_2 = 0.9$, with tuning parameters 
 $\lambda_1 = \lambda_2 = 10$, $\delta_1 = \delta_2 = 2.5$, and QMLE as initial estimator. 
 
Table \ref{taba:joint-disjoint} shows the empirical selection probabilities for the zero parameters, while Table \ref{tabb:joint-disjoint} shows the empirical mean squared errors (up to the third decimal digit).

 \begin{table}[h]
 	\centering
 \begin{tabular}{ccccc}
 	\hline 
 	Sel. Prob. & $\alpha_{12}$ & $\alpha_{21}$ &   $\beta_{11}$ & $\beta_{22}$ \\
 	\hline 
 	Joint & 0.987    &  0.998    &  0.093   &   0.199 \\
 	Disjoint & 0.997   &   1.000  &    0.095  &    0.218   \\
 	\hline
 \end{tabular}
 \caption{Selection probabilities for the parameters with true value equal to zero}\label{taba:joint-disjoint}
\end{table}

\begin{table}[h]
	\centering
	\begin{tabular}{ccccccc}
		\hline 
		MSE & $\alpha_{11}$ & $\alpha_{12}$ & $\alpha_{21}$ & $\alpha_{22}$ &   $\beta_{11}$ & $\beta_{22}$ \\
		\hline 
		Joint & 0.913   &   0.004   &   0.000    &  0.664 &     7.824   &   1.901  \\
		Disjoint & 0.967   &   0.001  &    0.000    &  0.752   &  42.700   &   1.081  \\
		\hline
	\end{tabular}
	\caption{Mean square error for each parameter}\label{tabb:joint-disjoint}
\end{table}

We see that whereas the selection probabilities are practically equivalent up to some Monte Carlo sample error, the mean square errors can be significantly higher for the disjoint method. Therefore, the joint estimator turns out to have a better performance with respect to the disjoint one, at least for the SDE \eqref{eq:sdecomp}.

\section{Proofs}\label{sec6}

	\begin{proof}[Proof of Theorem \ref{th:cons}] For the proof of this theorem, we were inspired from the proof of Theorem 1 in \cite{suz}. 
		Let us start by observing that
		
		\begin{align*}
		0&\geq  \mathcal F_n(\hat\theta_n)- \mathcal F_n(\theta_0)\\
		&=(\hat\theta_n-\tilde{\theta}_n)'\hat G_n(\hat\theta_n-\tilde{\theta}_n)+\sum_{i=1}^m\sum_{j=1}^{p_i}\lambda_{n,j}^i|\hat\theta_{n,j}^i|^{q_i} \\
		&\quad -\left((\theta_0-\tilde{\theta}_n)'\hat G_n(\theta_0-\tilde{\theta}_n)+\sum_{i=1}^m\sum_{j=1}^{p_i}\lambda_{n,j}^i|\theta_{0,j}^i|^{q_i} \right)\\
		&=(\hat\theta_n-\theta_0)'\hat G_n(\hat\theta_n-\theta_0)+2(\hat\theta_n-\theta_0)'\hat G_n(\theta_0-\tilde\theta_n)+\sum_{i=1}^m\sum_{j=1}^{p_i}\lambda_{n,j}^i\left(|\hat\theta_{n,j}^i|^{q_i}-|\theta_{0,j}^i|^{q_i}\right)		\end{align*}
		Let $K_i:=\max_{1\leq j\leq p_i^0}| \theta_{0,j}|^{q_i-1},i=1,...,m.$ By exploiting the same arguments adopted in the proof of Theorem 1 in \cite{suz}, we can write down
		$$\sum_{j=1}^{p_i}\lambda_{n,j}^i\left(|\hat\theta_{n,j}^i|^{q_i}-|\theta_{0,j}^i|^{q_i}\right)\geq - p_i^0 K_i a_n^i|\hat\theta_n^i-\theta_0^i|,\quad i=1,...,m.$$
		Let $||\cdot||$ be a matrix norm. We get
		
			\begin{align*}
		0&\geq (\hat\theta_n-\theta_0)'\hat G_n(\hat\theta_n-\theta_0)+2(\hat\theta_n-\theta_0)'\hat G_n(\theta_0-\tilde\theta_n)\\
		&\quad-\sum_{i=1}^m p_i^0 K_i r_n^i  a_n^i |(r_n^i)^{-1}(\hat\theta_n^i-\theta_0^i)|\\
		&\geq (\hat\theta_n-\theta_0)'\hat G_n(\hat\theta_n-\theta_0)+2(\hat\theta_n-\theta_0)'\hat G_n(\theta_0-\tilde\theta_n)\\
		&\quad-\left(\sum_{i=1}^m p_i^0 K_i r_n^i  a_n^i\right)|A_n^{-1}(\hat\theta_n-\theta_0)|\\
		&\geq [A_n^{-1}(\hat\theta_n-\theta_0)]'\hat{\mathfrak D}_n[A_n^{-1}(\hat\theta_n-\theta_0)]+2[A_n^{-1}(\hat\theta_n-\theta_0)]'\hat{\mathfrak D}_n[A_n^{-1}(\theta_0-\tilde\theta_n)]\\
		&\quad-\left(\sum_{i=1}^m p_i^0 K_i r_n^i  a_n^i\right)|A_n^{-1}(\hat\theta_n-\theta_0)|.
		\end{align*}
		Let $ \rho_{\text{min}}(M)$  and $\rho_{\text{max}}(M)$ be  the minimum and maximum eigenvalue, respectively, of a matrix $M.$ We have
		\begin{align*}
		 [A_n^{-1}(\hat\theta_n-\theta_0)]'\hat{\mathfrak D}_n[A_n^{-1}(\hat\theta_n-\theta_0)]&\geq\rho_{\text{min}}(\hat{\mathfrak D}_n)|A_n^{-1}(\hat\theta_n-\theta_0)|^2\\
		 &\geq  ||\hat{\mathfrak D}_n^{-1}||^{-1}|A_n^{-1}(\hat\theta_n-\theta_0)|^2,
		\end{align*}
		where the last step follows from $||\hat{\mathfrak D}_n^{-1}||\geq\rho_{\text{max}}(\hat{\mathfrak D}_n^{-1})=1/\rho_{\text{min}}(\hat{\mathfrak D}_n).$ Furthermore,
			\begin{align*}
			[A_n^{-1}(\hat\theta_n-\theta_0)]'\hat{\mathfrak D}_n[A_n^{-1}(\theta_0-\tilde\theta_n)]&\geq -|[A_n^{-1}(\hat\theta_n-\theta_0)]'\hat{\mathfrak D}_n[A_n^{-1}(\theta_0-\tilde\theta_n)]|\\
			&\geq -|A_n^{-1}(\hat\theta_n-\theta_0)|\,|\hat{\mathfrak D}_n[A_n^{-1}(\theta_0-\tilde\theta_n)]|\\
			&\geq -|A_n^{-1}(\hat\theta_n-\theta_0)|\, ||\hat{\mathfrak D}_n||\, |A_n^{-1}(\tilde\theta_n-\theta_0)|.
				\end{align*}
		Hence, we have proved that	
		\begin{align*}
		0&\geq ||\hat{\mathfrak D}_n^{-1}||^{-1}|A_n^{-1}(\hat\theta_n-\theta_0)|^2-2 ||\hat{\mathfrak D}_n||(|A_n^{-1}(\hat\theta_n-\theta_0)|\, |A_n^{-1}(\tilde\theta_n-\theta_0)|)\\
		&\quad -\left(\sum_{i=1}^m p_i^0 K_i r_n^i  a_n^i\right)|A_n^{-1}(\hat\theta_n-\theta_0)|.
		\end{align*}
		Therefore, from the assumptions
		\begin{align}\label{eq:bound}
		|A_n^{-1}(\hat\theta_n-\theta_0)|\leq ||\hat{\mathfrak D}_n^{-1}||\left[2 ||\hat{\mathfrak D}_n||\, |A_n^{-1}(\tilde\theta_n-\theta_0)|+\sum_{i=1}^m p_i^0 K_i r_n^i  a_n^i\right]=O_p(1),
		\end{align}
		which concludes the proof.
		\end{proof}

	\begin{proof}[Proof of Theorem \ref{th:sc}]
		By taking into account the standard approach based on the Karush-Kuhn-Tucker (KKT) conditions, we are able to prove the selection consistency property of the Bridge-type estimator \eqref{eq:LASSOtypest}. Let us assume that $\hat\theta_{n,j}^i\neq 0$ for some $j=p_i^0+1,...,p_i.$   Let us note that
		$$\hat G_n=\left(\begin{matrix}
		\hat G^1_n\\
		\hat G^2_n\\
		\vdots\\
		\hat G^m_n
		\end{matrix}\right)$$
		where $\hat G^i_n$ is a $p_i\times \mathfrak p$ random matrix, for $i=1,2,...,m$. Furthermore		\begin{align}\label{kkt1}
		r_n^i\frac{\partial}{\partial\theta_j^i}\mathcal F_n(\theta)\Bigg|_{\theta=\hat\theta_n}=2r_n^i \hat G_n^i(j) A_nA_n^{-1}(\hat\theta_n-\tilde\theta_n) +r_n^i q_i \lambda_{n,j}^i|\hat \theta_{n,j}^i|^{q_i-1}\sgn(\hat \theta_{n,j}^i)=0,
		\end{align}
		 for $j=p_i^0+1,...,p_i$ and $i=1,...,m$.
	By $\hat G_n^i(j)$ we denote the $j$-th row of $\hat G_n^i$. From \eqref{kkt1}, one has
 
 \begin{align*}
 |2r_n^i\hat G_n^i(j) A_nA_n^{-1}(\hat\theta_n-\tilde\theta_n) |\,|r_n^i\hat \theta_{n,j}^i |^{1-q_i}=q_i   (r_n^i)^{2-q_i} \lambda_{n,j}^i\geq q_i   (r_n^i)^{2-q_i}  b_n^i.
 \end{align*}
 
 By Theorem \ref{th:cons} and the assumptions, we have that  $$\underbrace{ |2r_n^i\hat G_n^i(j) A_nA_n^{-1}(\hat\theta_n-\tilde\theta_n) |}_{O_p(1)}\,\underbrace{|r_n^i\hat \theta_{n,j}^i |^{1-q_i}}_{o_p(1)}=o_p(1),$$ 
 while $q_i   (r_n^i)^{2-q_i}  b_n^i\stackrel{p}{\longrightarrow}\infty$.
  Therefore, for any  for $j=p_i^0+1,...,p_i,$ it turns out that
 \begin{align*}
 P\left(\hat \theta_{n,j}^i\neq 0\right)\leq P\left( |2r_n^i\hat G_n^i(j) A_nA_n^{-1}(\hat\theta_n-\tilde\theta_n) |\,|r_n^i\hat \theta_{n,j}^i |^{1-q_i}\geq q_i   (r_n^i)^{2-q_i}  b_n^i\right)\longrightarrow 0,
 \end{align*}
 as $n\longrightarrow \infty.$
 \end{proof}

	\begin{proof}[Proof of Theorem \ref{th:an}] In order to simplify the reading of the proof we drop the dependence from n; then we set $\hat\theta:=\hat\theta_n, \tilde\theta:=\tilde\theta_n$ and $\hat G:=\hat G_n.$ We will use an approach similar to that developed in the proof of the Theorem 3 in \cite{suz}. 
		Let us rewrite $\hat G$ as a partitioned matrix 
		$$\hat G =\left(  \begin{matrix} 
	      \hat G^{11} &    \hat G^{12} &\cdots &\hat G^{1m}\\
	        \hat G^{21}   &      \hat G^{22} & \cdots &\hat G^{2m} \\
	        \vdots &\vdots&

\ddots & \vdots \\
   \hat G^{m1}   &      \hat G^{m2} & \cdots &\hat G^{mm} 
	   \end{matrix}	\right)$$  
	where the blocks are given by
	$$\hat G^{ij} =\left(  \begin{matrix} 
	      \hat G^{ij}_{\star\star} &   \hat G^{ij}_{\star\bullet} \\
	        \hat G^{ij}_{\bullet\star} &   \hat G^{ij}_{\bullet\bullet} \\
	   \end{matrix}	\right),\quad 1\leq i,j\leq m.$$  
	We observe that
	   \begin{align*}
	   \mathcal{F}_n(\theta)&=(\theta-\tilde{\theta})'\hat G(\theta-\tilde{\theta})+\sum_{i=1}^m\sum_{j=1}^{p_i}\lambda_{n,j}^i|\theta_j^i|^{q_i}\\
	   &=\sum_{i=1}^m (\theta^i-\tilde{\theta}^i)_\star'\hat G_{\star\star}^{ii}(\theta^i-\tilde{\theta}^i)_\star+\sum_{i=1}^m (\theta^i-\tilde{\theta}^i)_\bullet'\hat G_{\bullet\bullet}^{ii}(\theta^i-\tilde{\theta}^i)_\bullet\\
	   &\quad+2\sum_{i=1}^m (\theta^i-\tilde{\theta}^i)_\star'\hat G_{\star\bullet}^{ii}(\theta^i-\tilde{\theta}^i)_\bullet+2\sum_{i=1}^m \sum_{j>i}\left[ (\theta^i-\tilde{\theta}^i)_\star'\hat G_{\star\star}^{ij}(\theta^j-\tilde{\theta}^j)_\star\right.\\
	   &\quad\left.+ (\theta^i-\tilde{\theta}^i)_\star'\hat G_{\star\bullet}^{ij}(\theta^j-\tilde{\theta}^j)_\bullet+(\theta^i-\tilde{\theta}^i)_\bullet'\hat G_{\bullet\star}^{ij}(\theta^j-\tilde{\theta}^j)_\star+(\theta^i-\tilde{\theta}^i)_\bullet'\hat G_{\bullet\bullet}^{ij}(\theta^j-\tilde{\theta}^j)_\bullet\right]\\
	   &\quad+\sum_{i=1}^m\sum_{j=1}^{p_i^0}\lambda_{n,j}^i|\theta_j^i|^{q_i}+\sum_{i=1}^m\sum_{j=p_i^0+1}^{p_i}\lambda_{n,j}^i|\theta_j^i|^{q_i}
	   \end{align*}
	   By setting $\check \theta:=(\theta^1_\star,0,\theta^2_\star,0,..., \theta^m_\star, 0)'\in\mathbb R^{\mathfrak p},$ we have 
	     \begin{align*}
	   \mathcal{F}_n(\check\theta)
	   &=\sum_{i=1}^m (\theta^i-\tilde{\theta}^i)_\star'\hat G_{\star\star}^{ii}(\theta^i-\tilde{\theta}^i)_\star+\sum_{i=1}^m (\tilde{\theta}^i)_\bullet'\hat G_{\bullet\bullet}^{ii}\tilde{\theta}^i_\bullet-2\sum_{i=1}^m (\theta^i-\tilde{\theta}^i)_\star'\hat G_{\star\bullet}^{ii}\tilde{\theta}^i_\bullet\\
	   &\quad+2\sum_{i=1}^m \sum_{j>i}\left[ (\theta^i-\tilde{\theta}^i)_\star'\hat G_{\star\star}^{ij}(\theta^j-\tilde{\theta}^j)_\star- (\theta^i-\tilde{\theta}^i)_\star'\hat G_{\star\bullet}^{ij}\tilde{\theta}^j_\bullet-(\tilde{\theta}^i_\bullet)'\hat G_{\bullet\star}^{ij}(\theta^j-\tilde{\theta}^j)_\star+(\tilde{\theta}^i_\bullet)'\hat G_{\bullet\bullet}^{ij}\tilde{\theta}^j_\bullet\right]\\
	   &\quad+\sum_{i=1}^m\sum_{j=1}^{p_i^0}\lambda_{n,j}^i|\theta_j^i|^{q_i}
	   \end{align*}
	   
	   Let $B_n^i:=\{\min_{1\leq j\leq p_i^0}|\hat \theta_j^i|>0,\, \hat \theta^i_\bullet =0, \,\text{det}(\hat G^{ii}_{\star\star})>0\},$ by Theorem \ref{th:cons}-\ref{th:sc} follows $P(\cap_{i=1}^m B_n^i)\longrightarrow 1.$ 	 We observe that, if $\cap_{i=1}^m B_n^i$ holds, then $\mathcal F_n(\hat \theta)=\min_{\check \theta\in \mathbb R_0^{\mathfrak{p}}} \mathcal F_n(\check \theta),$ where $\mathbb R_0^{\mathfrak{p}}:=\{\theta\in\mathbb R^{\mathfrak{p}}: \theta_\bullet^i=0, i=1,...,m\}.$ This remark implies on $B_n^i$
	   \begin{align*}
	   0=\frac12\frac{\partial}{\partial \theta_\star^i}\mathcal F_n(\theta)\Bigg |_{\theta=\hat \theta}&=\hat G_{\star\star}^{ii}(\hat \theta^i-\tilde\theta^i)_\star-\hat G^{ii}_{\star\bullet}\tilde\theta^i_\bullet\\
	   &\quad+\sum_{j>i}\left[\hat G_{\star\star}^{ij}(\hat \theta^j-\tilde\theta^j)_\star-\hat G^{ij}_{\star\bullet}\tilde\theta^j_\bullet \right] +Z(\hat \theta^i)
	   \end{align*}  
	   where $Z(\hat \theta^i):=(\frac12 q_i\lambda_{n,1}^i |\hat \theta_1^i|^{q_i-1}\sgn(\hat \theta_1^i),...,\frac12 q_i\lambda_{n,p_i^0}^i |\hat \theta_{p_i^0}^i|^{q_i-1}\sgn(\hat \theta_{p_i^0}^i))'.$ Therefore
	   \begin{align*}
	   (\hat \theta^i-\theta_0^i)_\star&=(\tilde \theta^i-\theta_0^i)_\star+(\hat G^{ii}_{\star\star})^{-1}\hat G^{ii}_{\star\bullet}\tilde\theta^i_\bullet\\
	   &\quad-\sum_{j>i}\left[(\hat G^{ii}_{\star\star})^{-1}\hat G_{\star\star}^{ij}(\hat \theta^j-\tilde\theta^j)_\star-(\hat G^{ii}_{\star\star})^{-1}\hat G^{ij}_{\star\bullet}\tilde\theta^j_\bullet \right] -(\hat G^{ii}_{\star\star})^{-1}Z(\hat \theta^i).
	   \end{align*}
	   
	   Let $\hat{\mathfrak G}_i:=({\bf I}_{p_i^0}\,\, \frac{1}{(r_n^i)^2}(\hat G^{ii}_{\star\star})^{-1}(r_n^i)^2 \hat G^{ii}_{\star\bullet})\stackrel{p}{\longrightarrow } \mathfrak G_i.$ Hence
	   \begin{align*}
	   &\frac{1}{r^i_n}(\hat\theta^i-\theta^i_0)_\star-  \mathfrak G_i \left\{\frac{1}{r^i_n}(\tilde\theta^i-\theta^i_0)\right\}\\
	   &={\bf 1}_{B_n^i}\Bigg\{\frac{1}{r^i_n}(\tilde \theta^i-\theta_0^i)_\star+\frac{1}{r^i_n}(\hat G^{ii}_{\star\star})^{-1}\hat G^{ii}_{\star\bullet}\tilde\theta^i_\bullet-\sum_{j>i}\left[\frac{1}{r^i_n}(\hat G^{ii}_{\star\star})^{-1}\hat G_{\star\star}^{ij}(\hat \theta^j-\tilde\theta^j)_\star-\frac{1}{r^i_n}(\hat G^{ii}_{\star\star})^{-1}\hat G^{ij}_{\star\bullet}\tilde\theta^j_\bullet \right] \\
	   &\quad-\frac{1}{r^i_n}(\hat G^{ii}_{\star\star})^{-1}Z(\hat \theta^i)-  \mathfrak G_i \left\{\frac{1}{r^i_n}(\tilde\theta^i-\theta^i_0)\right\}\Bigg\}+{\bf 1}_{(B_n^i)^c}\left\{\frac{1}{r^i_n}(\hat\theta^i-\theta^i_0)_\star-  \mathfrak G_i \left\{\frac{1}{r^i_n}(\tilde\theta^i-\theta^i_0)\right\}\right\}\\
	   &={\bf 1}_{B_n^i}\Bigg\{   (\hat{\mathfrak G}_i-\mathfrak G_i)\left\{\frac{1}{r^i_n}(\tilde \theta^i-\theta_0^i)\right\} -\sum_{j>i}\left[\frac{1}{r^i_n}(\hat G^{ii}_{\star\star})^{-1}\hat G_{\star\star}^{ij}(\hat \theta^j-\tilde\theta^j)_\star-\frac{1}{r^i_n}(\hat G^{ii}_{\star\star})^{-1}\hat G^{ij}_{\star\bullet}\tilde\theta^j_\bullet \right]\\
	   &\quad -\frac{1}{r^i_n}(\hat G^{ii}_{\star\star})^{-1}Z(\hat \theta^i) \Bigg\}
	 +{\bf 1}_{(B_n^i)^c}\left\{\frac{1}{r^i_n}(\hat\theta^i-\theta^i_0)_\star-  \mathfrak G_i \left\{\frac{1}{r^i_n}(\tilde\theta^i-\theta^i_0)\right\}\right\}\\
	  &={\bf 1}_{B_n^i}\left\{   (\hat{\mathfrak G}_i-\mathfrak G_i)\left\{\frac{1}{r^i_n}(\tilde \theta^i-\theta_0^i)\right\}+o_p(1) \right\}	 +{\bf 1}_{(B_n^i)^c}\left\{\frac{1}{r^i_n}(\hat\theta^i-\theta^i_0)_\star-  \mathfrak G_i \left\{\frac{1}{r^i_n}(\tilde\theta^i-\theta^i_0)\right\}\right\},
	   \end{align*}
	   where the last step holds because:
	   \begin{align*}
	  &\frac{1}{(r_n^i)^2}(\hat G^{ii}_{\star\star})^{-1}r_n^ir_n^j\hat G_{\star\star}^{ij}\frac{1}{r_n^j}(\hat \theta^j-\tilde\theta^j)_\star\,{\bf 1}_{B_n^i}= o_p(1) O_p(1)=o_p(1);\\
	 & \frac{1}{(r^i_n)^2}(\hat G^{ii}_{\star\star})^{-1}r_n^ir_n^j\hat G^{ij}_{\star\bullet}\frac{1}{r_n^j}\tilde\theta^j_\bullet\,{\bf 1}_{B_n^i}=o_p(1);\\
	 	 & \frac{1}{(r^i_n)^2}(\hat G^{ii}_{\star\star})^{-1}r_n^iZ(\hat \theta^i)\,{\bf 1}_{B_n^i}=o_p(1).
	 \end{align*}
	 
	 Finally,
	 $$\frac{1}{r^i_n}(\hat\theta^i-\theta^i_0)_\star-  \mathfrak G_i \left\{\frac{1}{r^i_n}(\tilde\theta^i-\theta^i_0)\right\}\stackrel{p}{\longrightarrow }0,   $$
	and then the result \eqref{oraclegen} holds.
	
   By adding the assumption A3,	\eqref{oracleasnorm} is a trivial consequence of \eqref{oraclegen}.  Furthermore, if $G=\Gamma=\text{diag}(\Gamma^{11},\Gamma^{22},...,\Gamma^{mm}),$ we get
	$$\mathfrak I=\text{diag}(\mathfrak I^{11},\mathfrak I^{22},...,\mathfrak I^{mm})$$
	and
	$$\mathfrak G\,\mathfrak I\,\mathfrak G'=\text{diag}(\mathfrak G_1\,\mathfrak I^{11}\,\mathfrak G_1',\mathfrak G_2\,\mathfrak I^{22}\,\mathfrak G_2',...,\mathfrak G_m\,\mathfrak I^{mm}\,\mathfrak G_m'),$$
	where $\mathfrak I^{ii}:=(\Gamma^{ii})^{-1},i=1,2,...,.m.$	
	 By exploiting the blockwise inversion of $\Gamma^{ii},$ we recall that
	\begin{equation}\label{eq:matrix}
	\mathfrak I^{ii}= \left( \begin{matrix} 
	       \mathfrak I_{\star\star}^{ii} & -   \mathfrak I_{\star\star}^{ii} \Gamma^{ii}_{\star\bullet} (\Gamma_{\bullet\bullet}^{ii})^{-1}\\\\
	      -   (\Gamma_{\bullet\bullet}^{ii})^{-1}\Gamma^{ii}_{\bullet\star}  \mathfrak I_{\star\star}^{ii} &  ( \Gamma^{ii}_{\bullet\bullet})^{-1}+( \Gamma^{ii}_{\bullet\bullet})^{-1}\Gamma^{ii}_{\bullet\star}   \mathfrak I_{\star\star}^{ii} \Gamma^{ii}_{\star\bullet} (\Gamma_{\bullet\bullet}^{ii})^{-1} \\
	   \end{matrix}\right)\end{equation}
	   where
	      \begin{equation} \label{eq:matrix2} \mathfrak I_{\star\star}^{ii} =(\Gamma^{ii}_{\star\star}-\Gamma_{\star\bullet}^{ii}(\Gamma_{\bullet\bullet}^{ii})^{-1}\Gamma_{\bullet\star}^{ii})^{-1}.\end{equation}
	      By taking into account \eqref{eq:matrix} and \eqref{eq:matrix2}, we obtain
	\begin{align*}
	\mathfrak G_i\, \mathfrak I^{ii}\,\mathfrak G_i'
	 &  = \mathfrak I_{\star\star}^{ii}- ( \Gamma^{ii}_{\star\star})^{-1} \Gamma^{ii}_{\star\bullet}(\Gamma_{\bullet\bullet}^{ii})^{-1}\Gamma^{ii}_{\bullet\star}  \mathfrak I_{\star\star}^{ii} - \mathfrak I_{\star\star}^{ii} \Gamma^{ii}_{\star\bullet} (\Gamma_{\bullet\bullet}^{ii})^{-1}   \Gamma^{ii}_{\bullet\star} ( \Gamma^{ii}_{\star\star})^{-1}\\
	   &\quad+ ( \Gamma^{ii}_{\star\star})^{-1} \Gamma^{ii}_{\star\bullet}(\Gamma_{\bullet\bullet}^{ii})^{-1}\Gamma^{ii}_{\bullet\star}  ( \Gamma^{ii}_{\star\star})^{-1}+ ( \Gamma^{ii}_{\star\star})^{-1} \Gamma^{ii}_{\star\bullet}(\Gamma_{\bullet\bullet}^{ii})^{-1}\Gamma^{ii}_{\bullet\star}  \mathfrak I_{\star\star}^{ii} \Gamma^{ii}_{\star\bullet}(\Gamma_{\bullet\bullet}^{ii})^{-1}\Gamma^{ii}_{\bullet\star} ( \Gamma^{ii}_{\star\star})^{-1} \\
	   &= \mathfrak I_{\star\star}^{ii}\left[\Gamma^{ii}_{\star	\star}- \Gamma^{ii}_{\star\bullet} (\Gamma_{\bullet\bullet}^{ii})^{-1}   \Gamma^{ii}_{\bullet\star} \right]( \Gamma^{ii}_{\star\star})^{-1}+ ( \Gamma^{ii}_{\star\star})^{-1} \Gamma^{ii}_{\star\bullet}(\Gamma_{\bullet\bullet}^{ii})^{-1}\Gamma^{ii}_{\bullet\star}  ( \Gamma^{ii}_{\star\star})^{-1}\\
	   &\quad-( \Gamma^{ii}_{\star\star})^{-1} \Gamma^{ii}_{\star\bullet}(\Gamma_{\bullet\bullet}^{ii})^{-1}\Gamma^{ii}_{\bullet\star}\mathfrak I_{\star\star}^{ii}\left[\Gamma^{ii}_{\star	\star}- \Gamma^{ii}_{\star\bullet} (\Gamma_{\bullet\bullet}^{ii})^{-1}   \Gamma^{ii}_{\bullet\star} \right]( \Gamma^{ii}_{\star\star})^{-1}\\
	   &=(\Gamma^{ii}_{\star	\star})^{-1},
	\end{align*}
which concludes the proof.	
	\end{proof}

\begin{proof}[Proof of Theorem \ref{th:sde}.]

The consistency, the selection consistency and the asymptotic normality are consequences of Theorem \ref{th:cons}-\ref{th:an}. From \eqref{eq:bound} and the Cauchy-Scharwz inequality, we derive the following bound
	\begin{align*}
	\mathbb{E} |A_n^{-1}(\hat\theta_n-\theta_0)|^q &\leq 2^{2q-1}\mathbb{E}\left[||\hat{\mathfrak D}_n^{-1}||^q\, ||\hat{\mathfrak D}_n||^q\, |A_n^{-1}(\tilde\theta_n^{QL}-\theta_0)|^q\right] +2^{(q-1)(m-1)}\sum_{i=1}^m\mathbb{E} \left[ p_i^0 K_i r_n^i  a_n^i\right]^q\\
	&	\leq  2^{2q-1} \sqrt{\mathbb{E}||\hat{\mathfrak D}_n^{-1}||^{2q}} \left(\mathbb{E} ||\hat{\mathfrak D}_n||^{4q}\right)^{1/4}\, \left(\mathbb E|A_n^{-1}(\tilde\theta_n^{QL}-\theta_0)|^{4q} \right)^{1/4}\\
	&\quad+2^{(q-1)(m-1)}\sum_{i=1}^m\mathbb{E} \left[ p_i^0 K_i r_n^i  a_n^i\right]^q.
	\end{align*}
	
	From the assumptions, the polynomial-type large deviation result (25) and Proposition 1 in \cite{yospol}, we obtain the uniformly $L^q$-boundedness  of the estimator.
\end{proof}

\end{document}